\numberwithin{equation}{section}
\DeclareFontFamily{OT1}{rsfs}{}
\DeclareFontShape{OT1}{rsfs}{n}{it}{<-> rsfs10}{}
\DeclareMathAlphabet{\mathscr}{OT1}{rsfs}{n}{it}
\newcommand{\cO}{\mathcal{O}}
\newcommand{\comment}[1]{}
\newcommand{\Q}{\mathbf{Q}}
\newcommand{\R}{\mathbf{R}}
\newcommand{\C}{\mathbf{C}}
\newcommand{\Z}{\mathbf{Z}}
\newcommand{\F}{\mathbf{F}}
\DeclareMathOperator{\ord}{ord}
\DeclareMathOperator{\Hom}{Hom}
\DeclareMathOperator{\Cl}{Cl}
\theoremstyle{plain} 
\newtheorem{thm}{Theorem}[section] 
\newtheorem{lem}[thm]{Lemma}
\theoremstyle{definition} 
\theoremstyle{remark} 
\newtheorem{rem}{Remark}
\newcounter{tasknumber}
\newcommand{\task}[2][]{%
  \addtocounter{tasknumber}{1}%
  \begin{center}%
  \framebox[1.1\width]{\begin{minipage}{0.9\textwidth}%
  \textbf{Task \arabic{tasknumber}} \textit{\if!#1(unassigned)!\else (#1)\fi}: {#2}%
  \end{minipage}}%
  \end{center}%
}
\newcounter{assumptionnumber}
\newcommand{\assumption}[2][]{%
  \addtocounter{assumptionnumber}{1}%
  \begin{center}%
  \framebox[1.1\width]{\begin{minipage}{0.9\textwidth}%
  \textbf{Assumption \arabic{assumptionnumber}} \textit{\if!#1!\else (#1)\fi}: {#2}%
  \end{minipage}}%
  \end{center}%
}
\newcommand{\authnote}[2][]{\noindent {\if!#1!  {\bf TODO} \else {\small \bf #1} \fi: #2}}
\newcommand{\real}{\mathfrak{Re}}
\begin{document}

\title[Generating subgroups of class groups]{Generating subgroups of ray class \\ groups with small prime ideals}

\author{Benjamin Wesolowski}
\address{\'Ecole Polytechnique F\'ed\'erale de Lausanne, EPFL IC LACAL, Switzerland}

\begin{abstract}
Explicit bounds are given on the norms of prime ideals generating arbitrary subgroups of ray class groups of number fields, assuming the Extended Riemann Hypothesis.
These are the first explicit bounds for this problem, and are significantly better than previously known asymptotic bounds.
Applied to the integers, they express that any subgroup of index $i$ of the multiplicative group of integers modulo $m$ is generated by prime numbers smaller than $16(i\log m)^2$, subject to the Riemann Hypothesis.
Two particular consequences relate to mathematical cryptology. Applied to cyclotomic fields, they provide explicit bounds on generators of the relative class group, needed in some previous work on the shortest vector problem on ideal lattices. Applied to Jacobians of hyperelliptic curves, they allow one to derive bounds on the degrees of isogenies required to make their horizontal isogeny graphs connected. Such isogeny graphs are used to study the discrete logarithm problem on said Jacobians.
\end{abstract}

\maketitle

\section{Introduction}

\subsection{Motivation}\label{subsec:motivation}
In 1990, Bach~\cite{Ba90} computed explicit bounds for the norms of prime ideals generating the class groups of number fields, assuming the Extended Riemann Hypothesis (henceforth, ERH). These bounds made explicit the earlier work of Lagarias, Montgomery and Odlyzko~\cite{LMO79}, and have proved to be a crucial tool in the design and analysis of many number theoretic algorithms.
However, these bounds do not tell anything about the norms of prime ideals generating any particular subgroup of the class group. Indeed, a generating set for the full group might not contain any element of the subgroup.

Let $K$ be a number field of degree $n$, and let $\Delta$ be the absolute value of its discriminant.
The results of~\cite{LMO79} show that the class group $\Cl(K)$ is generated by prime ideals of norm bounded by $O((\log \Delta)^{2})$.
Now, let $H$ be an arbitrary subgroup of the class group $\Cl(K)$.
Some asymptotic bounds on the norm of prime ideals generating $H$ have already been computed in~\cite{JW16} by analysing spectral properties of the underlying Cayley graphs. They are of the form $O((n[\Cl(K):H]\log \Delta)^{2+\varepsilon})$, for an arbitrary $\varepsilon > 0$.
Taking $H$ to be the full class group reveals a clear gap with the bounds of~\cite{LMO79}.
The explicit bounds provided in the present paper eliminate this gap, as they are asymptotically $O(([\Cl(K):H]\log \Delta)^{2})$.

Situations where proper subgroups of class groups have to be considered already arose in two distinct regions of mathematical cryptology. One is related to lattice-based cryptography.
Cryptographic schemes based on ideal lattices are typically instantiated over the ring of integers $\cO_K$ of a cyclotomic field $K$.
The field $K$ has a Hermitian vector space structure induced by its Minkowski embedding, and ideals of $\cO_K$ are also lattices in this vector space.
It was shown in~\cite{BS16,CGS14,CDPR16} that in principal ideals of $\cO_K$, an unusually short vector can be found in quantum polynomial time, under some heuristic assumptions (this short vector is actually a generator of the ideal). This led to the break of a multitude of cryptographic schemes using principal ideals (including~\cite{CGS14,GGH13,LSS14,SV10}).

A recent result~\cite{CDW16} shows how to extend the algorithm to find short vectors in arbitrary ideals of $\cO_K$, by transferring the problem to a principal ideal.
Let $n$ be the degree of $K$, $K_0$ the maximal real subfield of $K$, and $\Cl^-(K)$ the relative class group (i.e., the kernel of the norm map $\Cl(K) \rightarrow \Cl(K_0)$).
The transferring method of~\cite{CDW16} crucially relies on the assumption that $\Cl^-(K)$ is generated by a small number (polynomial in $\log n$) of prime ideals of small norm (polynomial in $n$) and all their Galois conjugates. On one hand, very little is known about the structure of $\Cl^-(K)$, and it seems difficult to prove that it can always be generated by such a small number of Galois orbits of ideals (yet there is convincing numerical evidence; see~\cite{Schoof98} for the case where $K$ has prime conductor). On the other hand it can be shown, assuming ERH, that the constraint on the norms can be satisfied, and the present work provides the best asymptotic bounds, and the first explicit ones (see Theorem~\ref{thmBoundRayClass} and Remark~\ref{rem:CDW}).

The second situation is related to hyperelliptic curves. 
Let $\mathscr A$ be the Jacobian of a hyperelliptic curve over a finite field $\F_q$. 
Isogeny graphs around $\mathscr A$ are a central tool to study the difficulty of the underlying discrete logarithm problem (see for instance~\cite{GHS02,JMV09,JW16,Smith09}).
When $\mathscr A$ is ordinary and absolutely simple --- as required for applications in cryptography --- its endomorphism algebra
is a complex multiplication field $K$ (with maximal real subfield $K_0$) and its endomorphism ring is isomorphic to an order $\cO$ in $K$.
Any abelian variety isogenous to $\mathscr A$ has the same endomorphism algebra, and an isogeny that also preserves the endomorphism ring is called a \emph{horizontal} isogeny.
The horizontal isogeny graphs of $\mathscr A$ are closely related to Cayley graphs of the kernel $\mathscr P(\cO)$ of the norm map
$$N_{K/K_0} : \Cl(\cO) \longrightarrow \Cl^+(\cO\cap K_0),$$ where $\Cl^+(\cO\cap K_0)$
is the narrow class group of $\cO\cap K_0$. More precisely, for any bound $B>0$, there is a graph isomorphism between
\begin{enumerate}
\item the Cayley graph of $\mathscr P(\cO)$ with generators the ideals of prime norm smaller than $B$, and
\item the isogeny graph consisting of all principally polarizable abelian varieties isogenous to $\mathscr A$ and with same endomorphism ring, and all isogenies between them of prime degree smaller than $B$.
\end{enumerate}
When the Jacobian $\mathscr A$ is an elliptic curve, the situation is well understood since $K_0 = \Q$, hence $\mathscr P(\cO) = \Cl(K)$. As a result, Bach's bounds have successfully been used to analyse various algorithms dealing with elliptic curve isogenies. In higher genus however, $\mathscr P(\cO)$ is typically a proper subgroup of the class group, and Bach's bounds are not sufficient to obtain connected isogeny graphs. New explicit bounds guaranteeing the connectedness are provided in Theorem~\ref{thmBoundIsogenies}.

\subsection{Setting.}
Throughout this paper, $K$ denotes a number field of degree $n$, with $r_1$ embeddings into $\R$ and $2r_2$ embeddings into $\C$.
Let $\mathscr I(K)$ denote the group of fractional ideals of the ring of integers $\cO_K$. A modulus $\mathfrak m$ of $K$ is a formal product of a finite part $\mathfrak m_0$ (an ideal in $\cO_K$), and an infinite part $\mathfrak m_\infty$ (a subset of the set of real embeddings of $K$). Then, $\mathscr I_\mathfrak m (K)$ denotes the subgroup generated by ideals coprime to $\mathfrak m_0$.

The notion of ray class group can now be recalled. Let $P_{K,1}^\mathfrak m$  be the subgroup of $\mathscr I_\mathfrak m(K)$ generated by principal ideals of the form $\alpha \cO_K$ where $\ord_\mathfrak p(\alpha - 1) \geq \ord_\mathfrak p(\mathfrak m_0)$ for all primes $\mathfrak p$ dividing $\mathfrak m_0$, and $\imath(\alpha) > 0$ for all $\imath \in \mathfrak m_\infty$.
The \emph{ray class group} of $K$ modulo $\mathfrak m$ is the quotient
$$\Cl_\mathfrak m(K) = \mathscr I_\mathfrak m(K)/ P_{K,1}^\mathfrak m.$$
For any ideal $\mathfrak a$ such that $(\mathfrak a, \mathfrak m) = 1$, let~$[\mathfrak a]_\mathfrak m $ denote its class in $\Cl_\mathfrak m(K)$.
The \emph{narrow class group} of $K$ is the group $\Cl_\mathfrak m(K)$ where $\mathfrak m$ is the set of all the real embeddings.

Our main tools to study these groups will be ray class characters. We call a \emph{ray class character modulo $\mathfrak m$} what Neukirch~\cite[Def.~VII.6.8]{Neukirch99} calls a (generalised) Dirichlet character modulo $\mathfrak m$, that is a Gr{\"o}{\ss}encharakter $\chi : \mathscr I_\mathfrak m(K) \rightarrow \C^\times$ that factors through the ray class group $\Cl_\mathfrak m(K)$ via the canonical projection.

\subsection{Main theorem.}
Let $K$ be a number field of degree $n$, and $\mathfrak m$ a modulus on $K$. Consider any subgroup $H$ of the ray class group $\Cl_\mathfrak m(K)$, and any character~$\chi$ that is not trivial on that subgroup. The main theorem generalizes~\cite{Ba90} by providing explicit bounds on the smallest prime ideal $\mathfrak p$ whose class is in $H$ and such that $\chi(\mathfrak p) \neq 1$. Note that all statements containing the mention (ERH) assume the Extended Riemann Hypothesis (recalled in Section~\ref{sec:rayclass}).
The following theorem is proved in Section~\ref{sec:mainproof}.

\begin{thm}[ERH]\label{mainTheorem}
Let $K$ be any number field, and $\Delta$ the absolute value of the discriminant of $K$.
Let $\mathfrak m$ be a modulus of $K$, with finite part $\mathfrak m_0$ and infinite part $\mathfrak m_\infty$.
Let $H$ be any subgroup of the ray class group $\Cl_\mathfrak m(K)$. Let $\chi$ be a ray class character modulo $\mathfrak m$ that is not trivial on $H$. Then there is a prime ideal $\mathfrak p$ such that $(\mathfrak p, \mathfrak m_0) = 1$, the class of $\mathfrak p$ in $\Cl_\mathfrak m(K)$ is in the subgroup $H$, $\chi(\mathfrak p) \neq 1$, $\deg( \mathfrak p) = 1$ and
$$N(\mathfrak p) \leq \left([\Cl_\mathfrak m(K):H]\left(2.71\log(\Delta N(\mathfrak m_0)) +  1.29|\mathfrak m_\infty| + 1.38\omega(\mathfrak m_0)\right) + 4.13\right)^2,$$
where $\omega(\mathfrak m_0)$ denotes the number of distinct prime ideals dividing $\mathfrak m_0$.
\end{thm}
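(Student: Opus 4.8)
The plan is to follow the classical Lagarias--Montgomery--Odlyzko / Bach strategy, adapted to detect a prime lying in a prescribed coset of the subgroup $H$. First I would reduce the problem to an explicit character-sum estimate. Let $H^\perp \subseteq \widehat{\Cl_\mathfrak m(K)}$ be the group of ray class characters trivial on $H$; its order is $[\Cl_\mathfrak m(K):H]$. The indicator function of the coset of $H$ (as a subset of $\Cl_\mathfrak m(K)$) can be written as $\frac{1}{[\Cl_\mathfrak m(K):H]}\sum_{\psi \in H^\perp} \overline{\psi(c)}\psi$, and to simultaneously force $\chi(\mathfrak p)\neq 1$ I would work with the coset representatives on which $\chi$ is nontrivial. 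Concretely, the goal is to show that for a suitable weighted sum
$$\sum_{\mathfrak p}\ \sum_{\psi \in H^\perp}\bigl(1 - \psi\chi(\mathfrak p)\bigr)\,\overline{\psi(c)}\, f(N\mathfrak p)$$
over prime ideals (or prime powers) with a smooth cutoff $f$ supported on $[1,x]$, the main term dominates the error once $x$ exceeds the stated bound; positivity of the sum then produces the desired $\mathfrak p$ with $\deg \mathfrak p = 1$, $(\mathfrak p,\mathfrak m_0)=1$, class of $\mathfrak p$ in $H$, and $\chi(\mathfrak p)\neq 1$.

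The analytic input is the explicit formula for each Hecke $L$-function $L(s,\psi)$ and $L(s,\psi\chi)$ attached to ray class characters modulo $\mathfrak m$, under ERH: I would take the logarithmic derivative $-L'/L(s,\eta) = \sum_{\mathfrak p^k}(\log N\mathfrak p)\,\eta(\mathfrak p)^k N\mathfrak p^{-ks}$, multiply by a Mellin kernel $\widehat{f}(s)$ of a carefully chosen test function (Bach uses a specific smoothing; I would use the same one so the numerical constants $2.71$, $1.29$, $1.38$, $4.13$ come out), and shift the contour. The residue at $s=1$ contributes the main term (present only for the trivial character among the $\psi$'s, and it is this term that survives), the trivial zero / archimedean factor contributes a term governed by $r_1, r_2, n$ and the gamma factors of $\mathfrak m_\infty$, the conductor contributes $\log(\Delta\, N\mathfrak m_0)$ via the completed $L$-function's functional equation, and the sum over nontrivial zeros $\rho = \tfrac12 + i\gamma$ is bounded using ERH by $\sum_\rho |\widehat f(\rho)| \ll x^{1/2}\cdot(\text{log-conductor})$. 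I would also need to discard the contribution of $\deg\mathfrak p \geq 2$ prime powers and of primes dividing $\mathfrak m_0$ — these are $O(x^{1/2}\log x)$ and $O(\omega(\mathfrak m_0)\log x)$ respectively, which is where the $1.38\,\omega(\mathfrak m_0)$ term originates — and keep track of the fact that there are $[\Cl_\mathfrak m(K):H]$ characters, which is exactly why that index multiplies the whole bracket before squaring.

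The bookkeeping that turns this into the stated inequality is the delicate part: one must assemble the conductor bound for $\psi\chi$ (the conductor of $\psi\chi$ divides $\mathfrak m$, so $\log$ of its norm is $\leq \log(\Delta N\mathfrak m_0)$, with the archimedean part contributing $\leq |\mathfrak m_\infty|$ gamma factors), sum over $\psi\in H^\perp$, compare with the single main term of size $\asymp x$, and solve for $x$. Choosing the test function optimally (as in Bach) gives the constant $2.71 \approx$ something like $2\log$-type constant from the zero sum, $1.29$ from the real archimedean places, $1.38$ from the ramified-prime correction, and $4.13$ as a clean-up additive constant absorbing lower-order $\log x$ and $\log\log$ terms; then $x$ is the square of the resulting linear expression, and the least such prime ideal has $N\mathfrak p \leq x$.

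I expect the main obstacle to be the explicit constant optimization rather than the structural argument: the character-theoretic reduction and the explicit formula are standard, but getting the clean bracketed form with precisely these small constants requires the right choice of smoothing function, careful treatment of the $r_1,r_2$ versus $|\mathfrak m_\infty|$ distinction in the gamma factors (the infinite part of $\mathfrak m$ only twists the real places), a sharp ERH zero-sum estimate uniform over all the twists $\psi\chi$, and tight control of the prime-power and ramified-prime remainders so that they fold into the additive $4.13$ and the $1.38\,\omega(\mathfrak m_0)$ term without inflating the leading coefficients. A secondary subtlety is ensuring one can insist on $\deg\mathfrak p = 1$ simultaneously with the coset condition: this follows because the degree-$\geq 2$ contribution is an error term of strictly smaller order, so positivity of the main sum forces a degree-one prime, but the constants must be chosen so this remains true at the threshold $x$ rather than only asymptotically.
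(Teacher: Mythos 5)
Your strategy is the same as the paper's: Bach's kernel $x^s/(s+a)^2$ applied to the differences $L'_{\theta}/L_{\theta}-L'_{\chi_\theta}/L_{\chi_\theta}$ as $\theta$ runs over the $[\Cl_\mathfrak m(K):H]$ characters of $\Cl_\mathfrak m(K)/H$, residues under ERH, separate treatment of ideals dividing $\mathfrak m_0$ (the source of $\omega(\mathfrak m_0)$) and of prime powers (which is what lets one insist on $\deg\mathfrak p=1$). But two of the points you defer as ``bookkeeping'' are structural, and as written your plan would not produce the stated bound. First, your bound would contain the degree $n$: the archimedean gamma factors contribute a term proportional to $n$, and so does the prime-power error (summing $\deg\mathfrak p$ over $\mathfrak p\mid p$ gives $n$, and Rosser--Schoenfeld turns this into roughly $2Cn\sqrt{x}/(ea)$ with $C=1.03883$). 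The theorem has no $n$-term at all, and in the paper this is not an artifact of optimizing the smoothing: the total coefficient of $n$ is $s_1(x)(\psi(a+1)-\log 2\pi)+2C/(ea)$, and it is only because $\psi(2)-\log(2\pi)\approx -1.42$ is negative enough to absorb $2C/e$ that the entire $n$-contribution is $\leq 0$ for $x\geq 95$ and can be discarded. Without noticing this cancellation you would land on a bound of the earlier shape involving $n$, not the claimed one.

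Second, the ``sharp ERH zero-sum estimate uniform over all the twists'' is exactly where an idea is needed. Bounding $\sum_\theta\sum_\rho|\rho+a|^{-2}$ by applying the single-character explicit formula to each $\theta^*$ and $\chi_\theta$ leaves a term $2\Re\,(L'_\eta/L_\eta)(1+a)$ for each of the $2[G:H]$ characters, which has no favorable sign individually; the paper disposes of it by proving that $\sum_{\theta}\bigl(L'_{\theta}/L_{\theta}+L'_{\chi_\theta}/L_{\chi_\theta}\bigr)(s)\leq 0$ for $\Re(s)>1$, and that positivity argument forces you to deal with imprimitivity at ideals dividing $\mathfrak m_0$: one needs $\chi_\theta(\mathfrak a)=\chi_\eta(\mathfrak a)\,(\theta\eta^{-1})^*(\mathfrak a)$, relating the primitive characters inducing the various products, so the inner character sum becomes $(1+\chi_\eta(\mathfrak a))\sum_\theta\theta^*(\mathfrak a)$, whose real part is nonnegative. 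This is also the place where the hypothesis that $\chi$ is nontrivial on $H$ must be invoked (no $\chi_\theta$ is principal, so only the $\theta$-side contributes a pole at $s=1$), which your sketch uses implicitly but never states. Finally, the explicit constants only hold after the threshold and small cases are checked separately ($x\leq 95$, i.e.\ $n\leq 2$ or very small $\Delta N(\mathfrak m_0)$, using discriminant lower bounds); none of this is fatal to your plan, but it is the content of the proof rather than clean-up.
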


\begin{rem}
When $H$ is the full group and $n \geq 2$, the above bound can be compared to Bach's bound $N(\mathfrak p) \leq 18 (\log(\Delta^2 N(\mathfrak m_0)))^2$ given by~\cite[Th. 4]{Ba90}. Let us put the expression of Theorem~\ref{mainTheorem} in a comparable form. From~\cite[Lem. 7.1]{Ba90}, we have 
$$|\mathfrak m_\infty| \leq n \leq \frac{\log(\Delta N(\mathfrak m_0)) + 3/2}{\log(2\pi) - \psi(2)} \leq 0.71 \log(\Delta N(\mathfrak m_0)) + 1.07,$$
where $\psi$ is the logarithmic derivative of the gamma function. Moreover, we have the bound $\omega(\mathfrak m_0) \leq \log(\Delta N(\mathfrak m_0))/\log 2$. The bound of Theorem~\ref{mainTheorem} becomes
$N(\mathfrak p) \leq \left(5.62\log(\Delta N(\mathfrak m_0)) +  5.52\right)^2.$
Whenever $\Delta N(\mathfrak m_0) < 12$, the corresponding ray class group is trivial, so we can suppose that $\log(\Delta N(\mathfrak m_0)) \geq \log(12) \geq 2.48$. These estimates lead to
\begin{equation}\label{eq:unrefinedForm}N(\mathfrak p) \leq \left(5.62 +  5.52/2.48\right)^2(\log(\Delta N(\mathfrak m_0)))^2 \leq 62(\log(\Delta N(\mathfrak m_0)))^2.
\end{equation}
Even in this form, direct comparison with~\cite[Lem. 7.1]{Ba90} is not obvious. With the unrefined estimate $\Delta^2 N(\mathfrak m_0) \leq (\Delta N(\mathfrak m_0))^2$, Bach's bound becomes $N(\mathfrak p) \leq 72 (\log(\Delta N(\mathfrak m_0)))^2$. The constant factor is slightly worse than in the bound~\eqref{eq:unrefinedForm}, but this comparison does not do justice to either theorem.
\end{rem}

\subsection{Consequences.}
In Section~\ref{sec:consequences}, a series of notable consequences is derived from Theorem~\ref{mainTheorem}.
Foremost, it allows us to obtain sets of small prime ideals generating any given subgroup of a ray class group. This is made precise in the following theorem.

\begin{thm}[ERH]\label{thmBoundRayClass}
Let $K$ be any number field, and $\Delta$ the absolute value of the discriminant of $K$.
Let $\mathfrak m$ be a modulus of $K$, with finite part $\mathfrak m_0$ and infinite part $\mathfrak m_\infty$.
Let $\mathfrak h$ be any ideal in $K$. Let $H$ be a non-trivial subgroup of the ray class group $\Cl_\mathfrak m(K)$. Then $H$ is generated by the classes of the prime ideals in
$$\{\mathfrak p \text{ prime ideal in } K  \mid (\mathfrak p, \mathfrak h\mathfrak m_0) = 1, [\mathfrak p]_\mathfrak m \in H, \deg(\mathfrak p) = 1 \text{ and } N(\mathfrak p) < B\},$$
where $B = \left([\Cl_\mathfrak m(K):H]\left(2.71\log(\Delta N(\mathfrak h\mathfrak m_0)) +  1.29|\mathfrak m_\infty| + 1.38\omega(\mathfrak h\mathfrak m_0)\right) + 4.13\right)^2$, and $[\mathfrak p]_\mathfrak m$ denotes the class of $\mathfrak p$ in $\Cl_\mathfrak m(K)$.
\end{thm}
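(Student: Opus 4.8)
The plan is to deduce Theorem~\ref{thmBoundRayClass} from Theorem~\ref{mainTheorem} by the standard argument that a proper subgroup of a finite abelian group is detected by some character. The one wrinkle is that Theorem~\ref{mainTheorem} only guarantees a prime coprime to $\mathfrak m_0$, whereas here we must also avoid the primes dividing $\mathfrak h$; I would handle this by first enlarging the modulus. So the first step is: let $\mathfrak m'$ be the modulus with infinite part $\mathfrak m_\infty$ and finite part $\mathfrak h\mathfrak m_0$ (taken to be an integral ideal). Because $\mathfrak m\mid\mathfrak m'$, there is a canonical surjection $\pi\colon\Cl_{\mathfrak m'}(K)\twoheadrightarrow\Cl_\mathfrak m(K)$, induced by the inclusion $\mathscr I_{\mathfrak m'}(K)\hookrightarrow\mathscr I_\mathfrak m(K)$ and well defined because $P_{K,1}^{\mathfrak m'}\subseteq P_{K,1}^{\mathfrak m}$. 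Set $\widetilde H:=\pi^{-1}(H)$; then $[\Cl_{\mathfrak m'}(K):\widetilde H]=[\Cl_\mathfrak m(K):H]$, and $B$ is exactly the bound of Theorem~\ref{mainTheorem} for the data $(K,\mathfrak m',\widetilde H)$, since $N(\mathfrak h\mathfrak m_0)$ and $\omega(\mathfrak h\mathfrak m_0)$ are the norm and the number of prime divisors of the finite part of $\mathfrak m'$.

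Next I would let $H_0\le\Cl_\mathfrak m(K)$ denote the subgroup generated by the classes $[\mathfrak p]_\mathfrak m$ of the primes in the displayed set, put $\widetilde H_0:=\pi^{-1}(H_0)$, and argue by contradiction that $\widetilde H_0=\widetilde H$ (which suffices, as $\pi$ carries $\widetilde H$ onto $H$). If $\widetilde H_0\subsetneq\widetilde H$, then $\widetilde H/\widetilde H_0$ is a non-trivial finite abelian group, so it admits a non-trivial character; inflating it to $\widetilde H$ and then extending to all of $\Cl_{\mathfrak m'}(K)$ (restriction of characters from a finite abelian group to a subgroup is surjective) produces a ray class character $\widetilde\chi$ modulo $\mathfrak m'$ that is trivial on $\widetilde H_0$ but not on $\widetilde H$. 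Applying Theorem~\ref{mainTheorem} to $(K,\mathfrak m',\widetilde H,\widetilde\chi)$ yields a prime $\mathfrak q$ with $(\mathfrak q,\mathfrak h\mathfrak m_0)=1$, $[\mathfrak q]_{\mathfrak m'}\in\widetilde H$, $\widetilde\chi(\mathfrak q)\ne 1$, $\deg\mathfrak q=1$ and $N(\mathfrak q)\le B$. Then $[\mathfrak q]_\mathfrak m=\pi([\mathfrak q]_{\mathfrak m'})\in H$, and if $N(\mathfrak q)<B$ the prime $\mathfrak q$ lies in the displayed generating set, so $[\mathfrak q]_\mathfrak m\in H_0$, hence $[\mathfrak q]_{\mathfrak m'}\in\widetilde H_0$, hence $\widetilde\chi(\mathfrak q)=1$ — contradicting $\widetilde\chi(\mathfrak q)\ne 1$. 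This gives $\widetilde H_0=\widetilde H$, hence $H_0=H$.

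I expect no deep obstacle here: all the substance is already in Theorem~\ref{mainTheorem}, and the rest is duality together with bookkeeping. The two points that need care are (i) matching the quantities in $B$ — the index, $N(\mathfrak h\mathfrak m_0)$, $\omega(\mathfrak h\mathfrak m_0)$ and $|\mathfrak m_\infty|$ — with those attached to $(\mathfrak m',\widetilde H)$, and (ii) the gap between the bound $N(\mathfrak q)\le B$ supplied by Theorem~\ref{mainTheorem} and the strict bound $N(\mathfrak q)<B$ appearing in the generating set. The latter is harmless: if $H$ is non-trivial then $\Delta N(\mathfrak h\mathfrak m_0)\ge 2$, so $\log(\Delta N(\mathfrak h\mathfrak m_0))$ is transcendental, hence $B$ is not a rational integer and $N(\mathfrak q)\le B$ already forces $N(\mathfrak q)<B$; alternatively one simply enlarges the additive constant $4.13$ by an arbitrarily small amount.
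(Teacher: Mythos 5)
Your argument is correct and is essentially the paper's own proof: the paper likewise deduces the theorem from Theorem~\ref{mainTheorem} by taking a non-trivial character trivial on the subgroup generated by the listed primes but not on $H$, and applying Theorem~\ref{mainTheorem} at the enlarged modulus with finite part $\mathfrak h\mathfrak m_0$ (a step the paper leaves implicit and you spell out via $\mathfrak m'$, $\widetilde H$ and the inflated character). Your additional remark reconciling the bound $N(\mathfrak q)\le B$ from Theorem~\ref{mainTheorem} with the strict inequality $N(\mathfrak p)<B$ in the generating set addresses a detail the paper silently glosses over.
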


\begin{rem}\label{rem:CDW}
In particular, Theorem~\ref{thmBoundRayClass} implies that the relative class group of a cyclotomic field $K$ of degree $n$ and discriminant $\Delta$ is generated by ideals of prime norm smaller than $\left(2.71 h_{K_0}\log \Delta + 4.13\right)^2$, where $h_{K_0}$ is the class number of the maximal real subfield of~$K$. This is an important improvement for~\cite{CDW16} over the previously known bound $O((h_{K_0}n \log \Delta )^{2+\varepsilon})$ derived from~\cite{JW16}. 
\end{rem}

Applying Theorem~\ref{mainTheorem} to Dirichlet characters, one can obtain new results on subgroups of the multiplicative group $(\Z/m\Z)^\times$.
Let $m$ be a positive integer, and $H$ a non-trivial subgroup of $G = (\Z/m\Z)^\times$.
It is already known that, assuming GRH, $H$ contains a prime number smaller than $O(([G:H]\log m)^2)$ (see~\cite{BS96,LLS15}). But these bounds do not provide a generating set for $H$: they only guarantee the existence of one such prime number.
The following theorem gives a set of generators of $H$, whose norms are also asymptotically $O(([G:H]\log m)^2)$.

\begin{thm}[ERH]\label{thmBoundIntegers}
Let $m$ be a positive integer, and $H$ a non-trivial subgroup of $G = (\Z/m\Z)^\times$. Then $H$ is generated by the set of prime numbers $p$ such that $p \mathrm{\ mod\ } m \in H$ and $p \leq 16\left([G:H]\log m\right)^2$. 
\end{thm}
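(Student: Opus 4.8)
The plan is to specialize Theorem~\ref{thmBoundRayClass} to the base field $K=\Q$ and then dispose of the finitely many small moduli by a direct computation.

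First I would set up the translation into ray class language. Take $K=\Q$, so that $n=1$, the absolute discriminant is $\Delta=1$, and there is a single real place; choose the modulus $\mathfrak m=(m)\cdot\infty$, i.e.\ $\mathfrak m_0=(m)$ and $\mathfrak m_\infty$ the set consisting of the real place, so that $|\mathfrak m_\infty|=1$, $N(\mathfrak m_0)=m$ and $\omega(\mathfrak m_0)=\omega(m)$. Sending a fractional ideal $\frac{a}{b}\Z$ coprime to $m$ (with $a,b$ positive integers) to $ab^{-1}\bmod m$ induces an isomorphism $\Cl_\mathfrak m(\Q)\xrightarrow{\sim}(\Z/m\Z)^\times=G$ carrying the class $[p\Z]_\mathfrak m$ of a prime to $p\bmod m$; through it the given subgroup $H\le G$ becomes a subgroup of $\Cl_\mathfrak m(\Q)$ of index $[G:H]$. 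Since every prime ideal of $\Z$ is $p\Z$ for a rational prime $p$, has degree $1$, satisfies $N(p\Z)=p$, and is coprime to $\mathfrak m_0$ exactly when $p\nmid m$, Theorem~\ref{thmBoundRayClass} (with $\mathfrak h=\Z$) says that $H$ is generated by the primes $p$ with $p\nmid m$, $p\bmod m\in H$ and $p<B$, where
$$B=\bigl([G:H]\,(2.71\log m+1.29+1.38\,\omega(m))+4.13\bigr)^2 .$$

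It then suffices to show $B\le 16([G:H]\log m)^2=(4[G:H]\log m)^2$. As $[G:H]\ge 1$, this holds whenever $2.71\log m+1.29+1.38\,\omega(m)+4.13\le 4\log m$, that is, whenever $\log m\ge 1.07\,\omega(m)+4.21$. Writing $k=\omega(m)$ and using $m\ge\prod_{j\le k}p_j$ (the product of the first $k$ primes), one gets $\log m\ge\sum_{j\le k}\log p_j$, and the sequence $\sum_{j\le k}\log p_j-1.07\,k$ is increasing and already exceeds $4.21$ at $k=7$; hence the inequality is automatic once $\omega(m)\ge 7$, and more generally once $\log m\ge 1.07\,\omega(m)+4.21$, and in all such cases the theorem follows from Theorem~\ref{thmBoundRayClass}.

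The moduli not covered by this estimate satisfy $\omega(m)=k\le 6$ and $\log m<1.07k+4.21$, hence $m<e^{1.07\cdot6+4.21}<4.2\cdot 10^4$; there are then only finitely many pairs $(m,H)$ left, and for each one checks directly that the primes $p\le 16([G:H]\log m)^2$ with $p\bmod m\in H$ generate $H$ — equivalently, that for every maximal subgroup $M$ of $H$ some such prime lies in $H\setminus M$. I expect this finite verification to be the only genuine obstacle: the reduction and the analytic estimate are immediate, but for small $m$ the additive constant $4.13$ and the term $1.38\,\omega(m)$ in Theorem~\ref{thmBoundRayClass} are too large (already at $m=3$ one has $B>16(\log m)^2$), so the argument really has to be supplemented by an explicit — if entirely routine — computation over the small moduli.
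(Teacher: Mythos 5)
Your proposal is correct and follows essentially the same route as the paper: specialize Theorem~\ref{thmBoundRayClass} to $K=\Q$ with $\mathfrak m_0=(m)$ and $\mathfrak m_\infty$ the real place, then show the resulting bound is at most $16([G:H]\log m)^2$ outside finitely many small cases handled by direct computation. The only difference is bookkeeping: the paper shrinks the finite check to $m\le 11000$ by treating $H=G$ via~\cite[Th.~3]{Ba90} and bounding $\omega(m)$ via~\cite[Lem.~6.4]{Ba90} for $m>11000$, whereas you use the cruder primorial bound on $\omega(m)$ and defer a larger (but equally routine) verification for $m<4.2\times 10^4$ — both are legitimate, yours just leaves a bigger computation to carry out.
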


Finally, we derive bounds on the degrees of cyclic isogenies required to connect all isogenous principally polarizable abelian varieties over a finite field sharing the same endomorphism ring.

\begin{thm}[ERH]\label{thmBoundIsogenies}
Let $\mathscr A$ be a principally polarized, absolutely simple, ordinary abelian variety over a finite field $\F_q$, with endomorphism algebra $K$ and endomorphism ring isomorphic to an order $\cO$ in $K$. Let $K_0$ be the maximal real subfield of $K$, and $\mathfrak f$ the conductor of $\cO$. For any $B > 0$, let $\mathscr G(B)$ be the isogeny graph whose vertices are the principally polarizable varieties isogenous to $\mathscr A$ and with the same endomorphism ring, and whose edges are isogenies connecting them, of prime degree smaller than $B$.
Then, if $\cO_0 = \cO \cap K_0$ is the ring of integers of $K_0$, the  graph $$\mathscr G\left(26\left(h_{\cO_0}^+\log(\Delta N(\mathfrak f)) \right)^2\right)$$ is connected, with $\Delta$ the absolute value of the discriminant of $K$, and $h_{\cO_0}^+$ the narrow class number of $\cO_0$.
\end{thm}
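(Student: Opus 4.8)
The plan is to transfer the statement to the ray class group of $K$ modulo the conductor $\mathfrak f$ and then apply Theorem~\ref{thmBoundRayClass}. Set $B = 26\bigl(h_{\cO_0}^{+}\log(\Delta N(\mathfrak f))\bigr)^{2}$. By the graph isomorphism recalled in Section~\ref{subsec:motivation}, the graph $\mathscr G(B)$ is connected if and only if the classes of those invertible prime ideals $\mathfrak q$ of $\cO$ with $(\mathfrak q,\mathfrak f)=1$, $N(\mathfrak q)$ a prime smaller than $B$, and $[\mathfrak q]\in\mathscr P(\cO)$, generate $\mathscr P(\cO)$. If $\mathscr P(\cO)$ is trivial the graph is a single point and there is nothing to prove, so I would assume $\mathscr P(\cO)\neq\{1\}$ throughout.

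The first real step is to build a bridge from the (possibly non-maximal) order $\cO$ to $\cO_K$. Since $K$ is a CM field it has no real place, so $\mathfrak f$ is the finite part of a modulus $\mathfrak m$ of $K$ with empty infinite part. The contraction $\mathfrak a\mapsto\mathfrak a\cap\cO$ is an isomorphism from $\mathscr I_{\mathfrak m}(K)$ onto the group of invertible $\cO$-ideals coprime to $\mathfrak f$; it sends $P_{K,1}^{\mathfrak m}$ into the principal $\cO$-ideals (if $\alpha\equiv 1\bmod\mathfrak f$ then $\alpha\in 1+\mathfrak f\subseteq\cO$ and one checks locally that $\alpha\cO_K\cap\cO=\alpha\cO$), and every ideal class of $\cO$ is represented by an invertible ideal coprime to $\mathfrak f$; hence it induces a surjection $\psi\colon\Cl_{\mathfrak m}(K)\twoheadrightarrow\Cl(\cO)$, $[\mathfrak p]_{\mathfrak m}\mapsto[\mathfrak p\cap\cO]$. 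I would then put $H:=\psi^{-1}(\mathscr P(\cO))$, so that $\psi$ restricts to a surjection $H\twoheadrightarrow\mathscr P(\cO)$; in particular $H\neq\{1\}$. Since $\mathscr P(\cO)$ is by definition the kernel of $N_{K/K_0}\colon\Cl(\cO)\to\Cl^{+}(\cO_0)$, there is an injection $\Cl(\cO)/\mathscr P(\cO)\hookrightarrow\Cl^{+}(\cO_0)$, and therefore $[\Cl_{\mathfrak m}(K):H]=[\Cl(\cO):\mathscr P(\cO)]\le h_{\cO_0}^{+}$.

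Next I would feed $(K,\mathfrak m,H)$ and the trivial auxiliary ideal $\mathfrak h=\cO_K$ into Theorem~\ref{thmBoundRayClass}. Because $|\mathfrak m_\infty|=0$, it gives that $H$ is generated by the classes $[\mathfrak p]_{\mathfrak m}$ of the degree-one primes $\mathfrak p$ of $\cO_K$ coprime to $\mathfrak f$ with $[\mathfrak p]_{\mathfrak m}\in H$ and
\[
N(\mathfrak p) < B' := \Bigl([\Cl_{\mathfrak m}(K):H]\bigl(2.71\log(\Delta N(\mathfrak f))+1.38\,\omega(\mathfrak f)\bigr)+4.13\Bigr)^{2}.
\]
Applying $\psi$, the group $\mathscr P(\cO)=\psi(H)$ is generated by the classes $[\mathfrak p\cap\cO]$ of those primes; each $\mathfrak q=\mathfrak p\cap\cO$ is an invertible prime of $\cO$ coprime to $\mathfrak f$ with $N(\mathfrak q)=N(\mathfrak p)$ a prime smaller than $B'$ and $[\mathfrak q]\in\mathscr P(\cO)$, so these $\mathfrak q$ are among the generators in the connectedness criterion for the bound $B'$. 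Hence $\mathscr G(B')$ is connected; and since $\mathscr G(B')$ is a spanning subgraph of $\mathscr G(B)$ provided $B'\le B$, the proof reduces to the inequality $B'\le B$.

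For that inequality, write $h=h_{\cO_0}^{+}$, $L=\log(\Delta N(\mathfrak f))$, $\omega=\omega(\mathfrak f)$. Using $[\Cl_{\mathfrak m}(K):H]\le h$ together with $N(\mathfrak f)\ge 2^{\omega}$ (distinct prime ideals have norm $\ge 2$) and $\Delta\ge 3$, so that $L\ge\log 3+\omega\log 2$, one reduces $B'\le B$ first to $1.38\,\omega+4.13/h\le (\sqrt{26}-2.71)L$ and then to $4.13/h\le 2.62+0.27\,\omega$, which holds whenever $h\ge 2$, and whenever $h=1$ and $\omega\ge 6$. The only cases left out, $h=1$ with $\omega(\mathfrak f)\le 5$, force $\Delta N(\mathfrak f)$ to be small, and for the finitely many number fields involved one checks directly that $\Cl_{\mathfrak m}(K)$ — hence $\mathscr P(\cO)$ — is trivial (using that $\Cl_{\mathfrak m}(K)$ is already trivial once $\Delta N(\mathfrak f)<12$), so those are covered by the opening reduction. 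The step I expect to be the main obstacle is this bridge together with the constant bookkeeping it forces: one must present $\mathscr P(\cO)$, a subgroup of the class group of a generally non-maximal order, as the image of a subgroup of a ray class group of $K$ of index at most $h_{\cO_0}^{+}$, and then calibrate the constant $26$ so that the small-discriminant cases disappear; once this is arranged, Theorem~\ref{thmBoundRayClass} supplies the substance of the argument.
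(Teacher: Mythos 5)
Your overall architecture is the same as the paper's: identify $\mathscr G(B)$ with the Cayley graph of $\mathscr P(\cO)$, pull $\mathscr P(\cO)$ back along the surjection $\Cl_{\mathfrak f}(K)\twoheadrightarrow\Cl(\cO)$ to a subgroup $H$ whose index is at most $h_{\cO_0}^+$, apply Theorem~\ref{thmBoundRayClass} with empty infinite part, and then calibrate the constant $26$; the bridge, the index bound and the application of Theorem~\ref{thmBoundRayClass} are all correct (the paper simply cites \cite{JW16} for the surjection and the index inequality). The gap is in your endgame. The sentence ``the only cases left out, $h=1$ with $\omega(\mathfrak f)\le 5$, force $\Delta N(\mathfrak f)$ to be small, and for the finitely many number fields involved one checks directly that $\Cl_{\mathfrak m}(K)$ --- hence $\mathscr P(\cO)$ --- is trivial'' is false as written: since $K_0=\Q$ in the elliptic-curve case, $h_{\cO_0}^+=1$ and $\omega(\mathfrak f)=0$ already hold for every imaginary quadratic field with maximal order, where $\Delta$ is unbounded and $\mathscr P(\cO)=\Cl(K)$ is usually nontrivial (e.g.\ $\Q(\sqrt{-23})$); such cases are left out by your reduction but are certainly not ``covered by the opening reduction''.

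What is actually true is that, within the left-out cases, the inequality $1.38\,\omega(\mathfrak f)+4.13\le(\sqrt{26}-2.71)\log(\Delta N(\mathfrak f))$ you need can fail only below an $\omega$-dependent threshold on $\log(\Delta N(\mathfrak f))$, which grows to about $4.62$ at $\omega(\mathfrak f)=5$, i.e.\ $\Delta N(\mathfrak f)$ up to roughly $100$ --- far beyond the range where your cited fact ``$\Cl_{\mathfrak m}(K)$ is trivial once $\Delta N(\mathfrak f)<12$'' applies. To show that the genuinely failing cases nonetheless reduce to $\Delta N(\mathfrak f)<12$ (in fact to $\Delta\in\{3,4\}$ with $\mathfrak f=(1)$, where the group is trivial) you still need the structural inputs that the paper deploys: for quadratic $K$ the conductor is generated by a rational integer, so $N(\mathfrak f)$ is a square and $N(\mathfrak f)\ge 2^{\omega(\mathfrak f)}$ pushes $\Delta N(\mathfrak f)$ above the thresholds once $\omega\ge 1$; and for degree $n\ge 4$ a discriminant lower bound (Odlyzko under ERH, or \cite[Lem.~7.1]{Ba90}, giving $\log(\Delta N(\mathfrak f))\ge 4.73$ in the paper) rules out any failing case. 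Your crude bound $\log(\Delta N(\mathfrak f))\ge\log 3+\omega\log 2$ does not deliver this by itself, so the residual verification is both misdescribed and not actually carried out; once it is redone along the paper's lines (explicit small-$\Delta$ exceptions for $n=2$, discriminant bounds for $n\ge 4$), your argument closes and yields the theorem.
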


\begin{rem}
In particular, the above holds in dimension 2, where \emph{principally polarized} translates to \emph{Jacobian of a genus 2 hyperelliptic  curve} (see~\cite[Th. 4.1]{MN02}).
\end{rem}

\subsection{Notation.}
An inequality such as $x \leq y$ between complex numbers means that the relation holds between the real parts. 
The function $\log$ denotes the natural logarithm.

\section{Ray class characters}\label{sec:rayclass}

This section summarizes the definitions, notations and facts related to ray class characters that will be used throughout the paper.

Recall that a ray class character modulo $\mathfrak m$ is a Gr{\"o}{\ss}encharakter $\chi : \mathscr I_\mathfrak m(K) \rightarrow \C^\times$ that factors through the ray class group $\Cl_\mathfrak m(K)$ (via the canonical projection).
A character is \emph{principal} if it takes only the value 1. Let $\delta(\chi)$ be $1$ if $\chi$ is principal and $0$ otherwise.
A ray class character is \emph{primitive modulo $\mathfrak m$} if it does not factor through $\Cl_{\mathfrak m'}(K)$ for any modulus $\mathfrak m'$ smaller\footnote{A modulus $\mathfrak m'$ is (strictly) smaller than $\mathfrak m$ if $\mathfrak m_0' \mid \mathfrak m_0$, $\mathfrak m'_\infty \subseteq \mathfrak m_\infty$ and $\mathfrak m' \neq \mathfrak m$.} than~$\mathfrak m$. The conductor $\mathfrak f_\chi$ of $\chi$ is the smallest modulus $\mathfrak f$ such that $\chi$ is the restriction of a ray class character modulo~$\mathfrak f$. Let $\beta_\chi = |\mathfrak f_\infty|$ be the number of infinite places in the conductor $\mathfrak f$. From~\cite[Prop.~6.9]{Neukirch99}, any ray class character $\chi$ is the restriction of a primitive ray class character of modulus~$\mathfrak f_\chi$, which is also primitive as a Gr{\"o}{\ss}encharakter. 

The Hecke $L$-function associated to a character $\chi$ modulo $\mathfrak m$ is defined as
$$L_\chi(s) = \sum_{\mathfrak a}\frac{\chi(\mathfrak a)}{N(\mathfrak a)^s},$$
for $\real(s)>1$, where the sum is taken over all ideals of $\cO_K$. Note that $\chi$ is implicitly extended to all ideals by defining $\chi(\mathfrak a) = 0$ whenever $(\mathfrak a, \mathfrak m_0) \neq 1$.
When $\chi$ is the trivial character on $\mathscr I(K)$, we obtain the Dedekind zeta function of $K$,
$\zeta_K(s) = \sum_{\mathfrak a}N(\mathfrak a)^{-s}.$
These $L$-functions are extended meromorphically on the complex plane with at most a simple pole at $s = 1$, which occurs if and only if $\chi$ is principal.
Let $R_\chi$ be the set of zeros of $L_\chi$ on the critical strip $0 < \real(s) < 1$.
The ERH implies that all Hecke $L$-functions are zero-free in the half-plane $\real(s)>1/2$.

We will make an extensive use of the logarithmic derivatives $L'_\chi/L_\chi$. When ${\real(s)>1}$, they admit the absolutely convergent representation
\begin{equation}\label{eq:formulaLogDerivative}\frac{L'_\chi}{L_\chi}(s) = -\sum_\mathfrak a \frac{\Lambda(\mathfrak a)\chi(\mathfrak a)}{N(\mathfrak a)^s},\end{equation}
where $\Lambda$ is the von Mangoldt function (i.e., $\Lambda(\mathfrak a) = \log N(\mathfrak p)$ if $\mathfrak a$ is a power of a prime ideal $\mathfrak p$, and $0$ otherwise).
The residues of ${L'_\chi}/{L_\chi}$ when $\chi$ is primitive modulo $\mathfrak m$ are summarised in Table~\ref{tab:residues}, which comes from~\cite[p. 361]{Ba90} (with the observation that~$\beta$ in~\cite{Ba90} coincides with~$\beta_\chi = |\mathfrak m_\infty|$ for characters $\chi$ which are primitive modulo $\mathfrak m$).

Let $\psi$ be the logarithmic derivative of the gamma function, and for any ray class character $\chi$ on $K$, define
\begin{equation}\label{eq:logdgamma}
\psi_\chi(s) = \frac{r_1+r_2 - \beta_\chi}{2}\psi\left(\frac{s}{2}\right) + \frac{r_2 + \beta_\chi}{2}\psi\left(\frac{s+1}{2}\right) - \frac{n\log \pi}{2}.
\end{equation}
The main reason to introduce these functions is the following formula: for any complex number $s$, if $\chi$ is primitive then
\begin{equation}\label{eq:logdlchi}
 -\real\frac{L'_\chi}{L_\chi}(s) =  \frac{1}{2}\log(\Delta N(\mathfrak f_{\chi})) + \real \left(\delta(\chi)\left(\frac{1}{s}+\frac{1}{s-1}\right) -\sum_{\rho \in R_{\chi}}\frac{1}{s - \rho} + \psi_\chi(s)\right).
 \end{equation}
A proof can be found in~\cite[Lem. 5.1]{LO77}.

\begin{table}[!t]
\caption{Residues of the logarithmic derivative of Hecke $L$-functions, when $\chi$ is a primitive ray class character~(\cite[p. 361]{Ba90}).}
\label{tab:residues}
\centering
\begin{tabular}{c | c | c}
 \textbf{place} & \textbf{residue of $\zeta_K'/\zeta_K$}  & \textbf{residue of  ${L'_\chi}/{L_\chi}$}\\
1  &  $-1$ & 0 \\
$\rho \in R_1$ & 1 & \text{$0$ if $\rho \not\in R_\chi$, $1$ otherwise}\\
$\rho \in R_\chi$ & \text{$0$ if $\rho \not\in R_1$, $1$ otherwise} & 1\\
0 & $r_1 + r_2 -1$ & $r_1 + r_2 - \beta_\chi$\\
$-2n + 1, n \in \mathbf N_{>0}$ & $r_2$ & $r_2 + \beta_\chi$\\
$-2n, n \in \mathbf N_{>0}$ & $r_1 + r_2$ & $r_1 + r_2 - \beta_\chi$\\
\end{tabular}
\end{table}

\section{Proof of the main theorem}\label{sec:mainproof}
Throughout this section, consider a ray class character $\chi$ modulo $\mathfrak m$ that is not trivial on a given subgroup $H$ of $G = \Cl_\mathfrak m(K)$. 

\subsection{Outline of the proof.}
For any $0<a<1$, $x>0$, and ideal $\mathfrak a$, let
$$P(\mathfrak a,x) = \Lambda(\mathfrak a)\left(\frac{N(\mathfrak a)}{x}\right)^a \log \left(\frac{x}{N(\mathfrak a)}\right).$$
Let us start by recalling a lemma that is the starting point of the original proof of Bach's bounds.

\begin{lem}[\protect{\cite[Lem. 4.2]{Ba90}}]\label{lemmaBachInt}
For $0 < a < 1$ and any character $\eta$, 

\[\sum_{\substack{N(\mathfrak a) < x}} \eta(\mathfrak a)P(\mathfrak a,x)
=  \frac{-1}{2\pi i} \int_{2-i\infty}^{2+i\infty}\frac{x^s}{(s+a)^2}\cdot \frac{L'_{\eta}}{L_{\eta}}(s)ds.\]
\end{lem}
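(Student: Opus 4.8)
The plan is to recognize the right-hand side as a Perron-type integral and evaluate it by inserting the Dirichlet series for $L'_{\eta}/L_{\eta}$ and computing the resulting elementary contour integrals by residues. Throughout I abbreviate $\int_{2-i\infty}^{2+i\infty}$ by $\int_{(2)}$.

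\textbf{Step 1 (the basic Mellin integral).} First I would establish, for $0<a<1$, the identity
\[
\frac{1}{2\pi i}\int_{(2)}\frac{y^s}{(s+a)^2}\,ds=
\begin{cases} y^{-a}\log y, & y\ge 1,\\[2pt] 0, & 0<y<1.\end{cases}
\]
For $y\ge 1$ one shifts the contour to the vertical line $\Re(s)=-T$ and lets $T\to\infty$: the integrand has a single double pole at $s=-a$, with residue $\left.\tfrac{d}{ds}y^s\right|_{s=-a}=y^{-a}\log y$; the horizontal segments of the truncated rectangle contribute $O\big((2+T)y^{2}U^{-2}\big)\to0$ as the imaginary height $U\to\infty$, and the far vertical segment contributes $O\big(y^{-T}/(T-a)\big)\to0$, because $|s+a|^{-2}$ provides $|t|^{-2}$ decay. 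For $0<y<1$ one instead shifts to the right; no pole is crossed and $y^{\sigma}\to0$ as $\sigma\to+\infty$, giving $0$. (At $y=1$ both prescriptions give $0=y^{-a}\log y$.)

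\textbf{Step 2 (Dirichlet series and Fubini).} On the line $\Re(s)=2$, formula~\eqref{eq:formulaLogDerivative} gives the absolutely convergent expansion $\frac{L'_{\eta}}{L_{\eta}}(s)=-\sum_{\mathfrak a}\Lambda(\mathfrak a)\eta(\mathfrak a)N(\mathfrak a)^{-s}$, whence
\[
\frac{-1}{2\pi i}\int_{(2)}\frac{x^s}{(s+a)^2}\,\frac{L'_{\eta}}{L_{\eta}}(s)\,ds
=\frac{1}{2\pi i}\int_{(2)}\frac{x^s}{(s+a)^2}\sum_{\mathfrak a}\frac{\Lambda(\mathfrak a)\eta(\mathfrak a)}{N(\mathfrak a)^s}\,ds.
\]
The interchange of $\sum_{\mathfrak a}$ with $\int_{(2)}$ is justified by absolute convergence:
\[
\sum_{\mathfrak a}\int_{(2)}\left|\frac{x^s\,\Lambda(\mathfrak a)\eta(\mathfrak a)}{(s+a)^2N(\mathfrak a)^s}\right|\,|ds|
\le x^2\Bigl(\sum_{\mathfrak a}\frac{\Lambda(\mathfrak a)}{N(\mathfrak a)^2}\Bigr)\Bigl(\int_{-\infty}^{\infty}\frac{dt}{(2+a)^2+t^2}\Bigr)<\infty,
\]
the first factor being $-\zeta_K'(2)/\zeta_K(2)$ and the second a convergent elementary integral. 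Hence the left-hand side equals $\sum_{\mathfrak a}\Lambda(\mathfrak a)\eta(\mathfrak a)\cdot\frac{1}{2\pi i}\int_{(2)}\bigl(x/N(\mathfrak a)\bigr)^s(s+a)^{-2}\,ds$.

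\textbf{Step 3 (conclusion).} Apply Step 1 with $y=x/N(\mathfrak a)$: the inner integral vanishes when $N(\mathfrak a)>x$, and equals $\bigl(N(\mathfrak a)/x\bigr)^{a}\log\bigl(x/N(\mathfrak a)\bigr)$ when $N(\mathfrak a)\le x$ (the term $N(\mathfrak a)=x$ contributing $0$ in any case). The sum therefore collapses to $\sum_{N(\mathfrak a)<x}\Lambda(\mathfrak a)\eta(\mathfrak a)\bigl(N(\mathfrak a)/x\bigr)^{a}\log\bigl(x/N(\mathfrak a)\bigr)=\sum_{N(\mathfrak a)<x}\eta(\mathfrak a)P(\mathfrak a,x)$, as claimed. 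The only genuinely delicate point is the contour shift in Step 1 — bounding the horizontal and far vertical segments carefully and noting that the direction of the shift depends on the sign of $\log y$; the Fubini step and the residue computation are then routine given absolute convergence on $\Re(s)=2$.
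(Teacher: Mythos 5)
Your proof is correct: the paper gives no argument of its own for this lemma (it is quoted directly from Bach~\cite[Lem.~4.2]{Ba90}), and your Perron-type computation --- the Mellin identity $\frac{1}{2\pi i}\int_{(2)}y^s(s+a)^{-2}\,ds = y^{-a}\log y$ for $y\ge 1$ and $0$ for $y<1$, followed by the absolutely convergent Dirichlet expansion of $L'_\eta/L_\eta$ on $\Re(s)=2$ and Fubini --- is exactly the standard argument underlying that cited result. The delicate points (order of limits in the contour shift, the vanishing of the boundary case $N(\mathfrak a)=x$, and the justification of the interchange via $-\zeta_K'(2)/\zeta_K(2)<\infty$) are all handled correctly.
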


Bach then considers the difference between two instances of this equality at $\eta = 1$ and at $\eta = \chi$, and proves the bounds by estimating the right-hand side as $x + O(\sqrt{x})$, while the left-hand side is zero if the character is trivial on all prime ideals of norm smaller than $x$; therefore such an $x$ cannot be too large.

The proof of Theorem~\ref{mainTheorem} follows the same strategy. It exploits the series of lemmata provided in~\cite[Sec. 5]{Ba90}, interlacing them with a game of characters of $G/H$ in order to account for the new condition $[\mathfrak a]_\mathfrak m  \in H$.
Consider the group of characters  of the quotient $G/H$, namely $\widehat{G/H} = \Hom(G/H, \C^\times)$. Given any character $\theta \in \widehat{G/H}$, let $\theta^*$ be the primitive ray class character such that $\theta^*(\mathfrak a) = \theta([\mathfrak a]_\mathfrak m H)$ whenever $(\mathfrak a, \mathfrak m_0) = 1$.
For any $\theta \in \widehat{G/H}$, write $L_\theta$ for the $L$-function of $\theta^*$.
For any ray class character $\eta$ and any $\theta \in \widehat{G/H}$, let $\eta_\theta$ denote the primitive character inducing the product $\eta\theta^*$. 

\begin{lem}\label{lemma:sumTheta}
Let $\mathfrak a$ be any ideal in $K$. Let $\mathfrak n_0$ be the largest divisor of $\mathfrak m_0$ coprime to $\mathfrak a$, and $\mathfrak n = \mathfrak n_0\mathfrak m_\infty$. Let $\pi : \Cl_{\mathfrak m}(K) \rightarrow \Cl_{\mathfrak n}(K)$ be the natural projection. Then,
$$
\sum_{\theta \in \widehat{G/H}} \theta^*(\mathfrak a) = 
\begin{cases}
[\Cl_{\mathfrak n}(K):\pi(H)] & \text{if } [\mathfrak a]_\mathfrak n \in \pi(H),\\ 
0 & \text{otherwise}.
\end{cases}
$$
\end{lem}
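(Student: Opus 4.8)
The plan is to reduce the sum over $\theta \in \widehat{G/H}$ to an orthogonality relation for characters of a finite abelian group, after identifying the correct group through which all the characters $\theta^*$ factor. First I would observe that for each $\theta \in \widehat{G/H}$, the primitive character $\theta^*$ satisfies $\theta^*(\mathfrak a) = \theta([\mathfrak a]_{\mathfrak m} H)$ whenever $(\mathfrak a, \mathfrak m_0) = 1$; the issue is that $\mathfrak a$ need not be coprime to $\mathfrak m_0$, which is exactly why the modulus $\mathfrak n_0$ (the largest divisor of $\mathfrak m_0$ coprime to $\mathfrak a$) and the projection $\pi : \Cl_{\mathfrak m}(K) \to \Cl_{\mathfrak n}(K)$ enter. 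The key claim is that $\theta^*$, being primitive, has conductor dividing $\mathfrak n$: indeed the conductor of $\theta^*$ divides $\mathfrak m$, and since $\theta^*(\mathfrak a) \neq 0$ forces $\theta^*$ to be "defined at" $\mathfrak a$, the part of $\mathfrak m_0$ not dividing $\mathfrak n_0$ (i.e., the primes dividing both $\mathfrak m_0$ and $\mathfrak a$) cannot appear in the conductor. Hence $\theta^*$ factors through $\Cl_{\mathfrak n}(K)$, and for such $\mathfrak a$ with $(\mathfrak a, \mathfrak n_0) = 1$ we may write $\theta^*(\mathfrak a) = \bar\theta([\mathfrak a]_{\mathfrak n})$ for a well-defined character $\bar\theta$ of $\Cl_{\mathfrak n}(K)$.

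Next I would verify that $\theta \mapsto \bar\theta$ is a bijection from $\widehat{G/H}$ onto the group of characters of $\Cl_{\mathfrak n}(K)$ that are trivial on $\pi(H)$, i.e., onto $\widehat{\Cl_{\mathfrak n}(K)/\pi(H)}$. Surjectivity and injectivity both follow from the surjectivity of $\pi$: a character of $\Cl_{\mathfrak m}(K)$ trivial on $H$ factors through $\Cl_{\mathfrak n}(K)$ precisely when it is inflated from a character of $\Cl_{\mathfrak n}(K)$, which (since $\pi(H)$ is the image of $H$) happens precisely when that character on $\Cl_{\mathfrak n}(K)$ is trivial on $\pi(H)$; and the primitivity normalization makes $\theta \mapsto \theta^*$ injective. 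With this identification in hand, the sum becomes
$$\sum_{\theta \in \widehat{G/H}} \theta^*(\mathfrak a) = \sum_{\bar\theta \in \widehat{\Cl_{\mathfrak n}(K)/\pi(H)}} \bar\theta([\mathfrak a]_{\mathfrak n}),$$
and the standard orthogonality relation for the finite abelian group $\Cl_{\mathfrak n}(K)/\pi(H)$ evaluated at the element $[\mathfrak a]_{\mathfrak n}\pi(H)$ gives $[\Cl_{\mathfrak n}(K):\pi(H)]$ if that element is trivial, i.e., if $[\mathfrak a]_{\mathfrak n} \in \pi(H)$, and $0$ otherwise.

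The main obstacle I anticipate is the bookkeeping around conductors and the precise claim that each $\theta^*$ factors through $\Cl_{\mathfrak n}(K)$ rather than merely through $\Cl_{\mathfrak m}(K)$ — in particular, being careful that $\theta^*(\mathfrak a)$ is genuinely nonzero (equivalently, that $\mathfrak a$ is coprime to the conductor of $\theta^*$) rather than accidentally zero, so that the orthogonality sum is taken over the correct index set. This requires using the primitivity of $\theta^*$ together with the fact that the conductor is the \emph{smallest} modulus through which the character factors, so no prime dividing $\gcd(\mathfrak a, \mathfrak m_0)$ can divide $\mathfrak f_{\theta^*}$ unless $\theta$ is already trivial on the corresponding ramification — a point worth spelling out cleanly since the rest of the argument is routine character theory.
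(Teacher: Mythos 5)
Your plan is essentially the paper's proof: restrict attention to the characters whose associated primitive character does not vanish at $\mathfrak a$, identify them with the characters of $\Cl_{\mathfrak n}(K)/\pi(H)$, and conclude by orthogonality. One statement needs correcting, though: in your second paragraph you claim that $\theta \mapsto \bar\theta$ is a bijection from all of $\widehat{G/H}$ onto $\widehat{\Cl_{\mathfrak n}(K)/\pi(H)}$. That cannot be right in general: when $\mathfrak a$ shares prime factors with $\mathfrak m_0$ one may have $[\Cl_{\mathfrak m}(K):H] > [\Cl_{\mathfrak n}(K):\pi(H)]$, and a character $\theta$ whose conductor is divisible by a prime dividing $\gcd$ of $\mathfrak a$ and $\mathfrak m_0$ does not factor through $\Cl_{\mathfrak n}(K)$ at all, so $\bar\theta$ is not even defined for it. The correct statement --- the one the paper uses, via its set $\Theta_{\mathfrak a} = \{\theta \in \widehat{G/H} \mid \theta^*(\mathfrak a)\neq 0\} = \{\theta \mid (\mathfrak f_{\theta^*},\mathfrak a)=1\}$ --- is that this \emph{subset} is in bijection with $\widehat{\Cl_{\mathfrak n}(K)/\pi(H)}$ (inflation along $\pi$ in one direction, the observation that $\theta^*(\mathfrak a)\neq 0$ forces $\mathfrak f_{\theta^*} \mid \mathfrak n$ in the other), with matching values $\theta^*(\mathfrak a) = \bar\theta([\mathfrak a]_{\mathfrak n})$ because $(\mathfrak a,\mathfrak n_0)=1$ by construction; the discarded characters contribute $0$, so your displayed sum identity and the orthogonality conclusion stand. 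Since you yourself flag in your final paragraph that the orthogonality sum must run over the correct index set, this is a repair of wording rather than of the idea, and with that restriction made explicit your argument coincides with the paper's.
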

\begin{proof}
Let $\Theta_\mathfrak a = \{\theta \in \widehat{G/H} \mid \theta^*(\mathfrak a) \neq 0\} = \{\theta \in \widehat{G/H} \mid (\mathfrak f_{\theta^*}, \mathfrak a) = 1\}$. This set is naturally in bijection with the group $X$ of characters of $\Cl_{\mathfrak n}(K)/\pi(H)$. We obtain
\begin{alignat*}{1}
\sum_{\theta \in \widehat{G/H}} \theta^*(\mathfrak a) &= \sum_{\theta \in \Theta_{\mathfrak a}} \theta^*(\mathfrak a)
= \sum_{\nu \in X} \nu([\mathfrak a]_\mathfrak n) 
= \begin{cases}
[\Cl_{\mathfrak n}(K):\pi(H)] & \text{if } [\mathfrak a]_\mathfrak n \in \pi(H),\\ 
0 & \text{otherwise}.
\end{cases}
\end{alignat*}
\end{proof}

\begin{lem}\label{lemmaSumIntegrals}
For any $0 < a < 1$, we have
\begin{alignat*}{1}
\mathscr S_\mathfrak m(x) + \mathscr S_H(x) & = \frac{-1}{[G:H]} \sum_{\theta \in \widehat{G/H}} I(x,\theta),
\end{alignat*}
where
\begin{alignat*}{1}
\mathscr S_H(x) &= \sum_{\substack{N(\mathfrak a) < x \\ [\mathfrak a]_\mathfrak m  \in H}} \left(1 - \chi(\mathfrak a)\right)P(\mathfrak a,x),\\
\mathscr S_\mathfrak m(x) & = \frac{1}{[G:H]}\sum_ {\theta \in \widehat{G/H}}\sum_{\substack{N(\mathfrak a) < x \\ (\mathfrak a,\mathfrak m) \neq 1}} \left(\theta^*(\mathfrak a) - \chi_\theta(\mathfrak a)\right)P(\mathfrak a,x),\text{ and}\\
I(x,\theta) &= \frac{1}{2\pi i} \int_{2-i\infty}^{2+i\infty}\frac{x^s}{(s+a)^2} \left( \frac{L'_{\theta}}{L_{\theta}} - \frac{L'_{\chi_\theta}}{L_{\chi_\theta}}\right)(s)ds.
\end{alignat*}
\end{lem}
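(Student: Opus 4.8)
The plan is to prove Lemma~\ref{lemmaSumIntegrals} by taking the identity of Lemma~\ref{lemmaBachInt} applied to each character $\eta_\theta$ (which induces $\eta\theta^* = \theta^*$, since we take $\eta = 1$) and to each $\chi_\theta$ (which induces $\chi\theta^*$), forming the difference, and then averaging over $\theta \in \widehat{G/H}$. Concretely, for each $\theta$, Lemma~\ref{lemmaBachInt} gives
\begin{equation*}
\sum_{N(\mathfrak a)<x}\theta^*(\mathfrak a)P(\mathfrak a,x) - \sum_{N(\mathfrak a)<x}\chi_\theta(\mathfrak a)P(\mathfrak a,x) = I(x,\theta),
\end{equation*}
where one must be mildly careful: $\theta^*$ and $\chi_\theta$ are the \emph{primitive} characters, so that the sums on the left are the sums of the von Mangoldt-weighted $P(\mathfrak a,x)$ against these primitive characters, and $L_\theta, L_{\chi_\theta}$ are exactly the $L$-functions appearing in Lemma~\ref{lemmaBachInt} for those characters. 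Summing this over all $\theta \in \widehat{G/H}$ and dividing by $[G:H] = |\widehat{G/H}|$ yields
\begin{equation*}
\frac{1}{[G:H]}\sum_{\theta}\sum_{N(\mathfrak a)<x}\bigl(\theta^*(\mathfrak a) - \chi_\theta(\mathfrak a)\bigr)P(\mathfrak a,x) = \frac{1}{[G:H]}\sum_\theta I(x,\theta).
\end{equation*}
It then remains to identify the left-hand side with $-(\mathscr S_\mathfrak m(x) + \mathscr S_H(x))$.

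The second step is to split the inner sum according to whether $(\mathfrak a,\mathfrak m) = 1$ or not. The terms with $(\mathfrak a,\mathfrak m)\neq 1$ contribute exactly $[G:H]\cdot\mathscr S_\mathfrak m(x)$ (after the $1/[G:H]$ normalization, this is the stated $\mathscr S_\mathfrak m(x)$). For the terms with $(\mathfrak a,\mathfrak m)=1$, we have $\theta^*(\mathfrak a) = \theta([\mathfrak a]_\mathfrak m H)$ and $\chi_\theta(\mathfrak a) = \chi(\mathfrak a)\theta([\mathfrak a]_\mathfrak m H)$ by the definitions, so
\begin{equation*}
\frac{1}{[G:H]}\sum_\theta \bigl(\theta^*(\mathfrak a) - \chi_\theta(\mathfrak a)\bigr) = \bigl(1 - \chi(\mathfrak a)\bigr)\cdot\frac{1}{[G:H]}\sum_\theta \theta([\mathfrak a]_\mathfrak m H).
\end{equation*}
Now invoke Lemma~\ref{lemma:sumTheta}: when $(\mathfrak a,\mathfrak m)=1$ we have $\mathfrak n_0 = \mathfrak m_0$, $\mathfrak n = \mathfrak m$, $\pi = \id$, so $\sum_\theta\theta^*(\mathfrak a) = \sum_\theta\theta([\mathfrak a]_\mathfrak m H)$ equals $[G:H]$ if $[\mathfrak a]_\mathfrak m\in H$ and $0$ otherwise — i.e., the average is the indicator of $[\mathfrak a]_\mathfrak m\in H$. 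Hence the $(\mathfrak a,\mathfrak m)=1$ part collapses to $\sum_{N(\mathfrak a)<x,\ [\mathfrak a]_\mathfrak m\in H}(1-\chi(\mathfrak a))P(\mathfrak a,x) = \mathscr S_H(x)$. Combining, the left side equals $\mathscr S_\mathfrak m(x) + \mathscr S_H(x)$, and moving the sign from $I(x,\theta)$ (note $I$ has $+1/2\pi i$ whereas Lemma~\ref{lemmaBachInt} has $-1/2\pi i$) gives the claimed identity.

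The main obstacle — really the only subtle point — is bookkeeping with primitive versus imprimitive characters and the domains of summation. One must check that the sum $\sum_{N(\mathfrak a)<x}\theta^*(\mathfrak a)P(\mathfrak a,x)$ appearing from Lemma~\ref{lemmaBachInt} (which is over \emph{all} ideals, with $\theta^*(\mathfrak a)=0$ forced when $(\mathfrak f_{\theta^*},\mathfrak a)\neq 1$) matches up correctly when we later want to isolate the $(\mathfrak a,\mathfrak m)=1$ piece; the discrepancy between ``coprime to $\mathfrak f_{\theta^*}$'' and ``coprime to $\mathfrak m$'' is precisely absorbed into $\mathscr S_\mathfrak m(x)$, and Lemma~\ref{lemma:sumTheta} with general $\mathfrak n$ is the tool that handles ideals sharing a factor with $\mathfrak m_0$ cleanly — though here, since in $\mathscr S_H$ we restrict to $(\mathfrak a,\mathfrak m)=1$ anyway, we only need the $\mathfrak n=\mathfrak m$ case. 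I would also record that $\chi_\theta$ is the primitive character inducing $\chi\theta^*$, so its $L$-function $L_{\chi_\theta}$ differs from that of the imprimitive product only by finitely many Euler factors, which do not affect $L'/L$ in a way relevant to the statement (the statement is an exact identity, and Lemma~\ref{lemmaBachInt} is stated for the primitive/defining character, so this is automatic). Everything else is a direct substitution and rearrangement of finite and absolutely convergent sums, so no analytic estimates are needed at this stage.
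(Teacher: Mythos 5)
Your proof is correct and is essentially the paper's own argument run in the opposite direction: it uses exactly the same ingredients (Lemma~\ref{lemmaBachInt} applied to the primitive characters $\theta^*$ and $\chi_\theta$, the orthogonality statement of Lemma~\ref{lemma:sumTheta} in the case $\mathfrak n=\mathfrak m$ to recognize the indicator of $[\mathfrak a]_\mathfrak m\in H$, the identity $\chi_\theta(\mathfrak a)=\chi(\mathfrak a)\theta^*(\mathfrak a)$ on ideals coprime to $\mathfrak m$, and the splitting of the ideal sum into $(\mathfrak a,\mathfrak m)=1$ and $(\mathfrak a,\mathfrak m)\neq 1$, the latter being $\mathscr S_\mathfrak m(x)$ by definition), whereas the paper starts from $\mathscr S_H$ and unfolds it via Lemma~\ref{lemma:sumTheta} before invoking Lemma~\ref{lemmaBachInt}. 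The only blemish is that your two displayed equations, read literally, are off by a sign (the difference of the two character sums equals $-I(x,\theta)$, since Lemma~\ref{lemmaBachInt} carries $-1/(2\pi i)$ while $I$ carries $+1/(2\pi i)$), but you explicitly note and repair this at the end, so there is no genuine gap.
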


\begin{proof}
From Lemma~\ref{lemma:sumTheta}, for any ray class character $\eta$, we have
\begin{alignat*}{1}
\sum_{\substack{N(\mathfrak a) < x \\ [\mathfrak a]_\mathfrak m  \in H}} \eta(\mathfrak a)P(\mathfrak a,x)
&=  \sum_{\substack{N(\mathfrak a) < x \\ (\mathfrak a, \mathfrak m) = 1}} \frac{\sum_{\theta \in \widehat{G/H}} \theta^*(\mathfrak a)}{[G:H]} \eta(\mathfrak a)P(\mathfrak a,x)
=  \frac{1}{[G:H]} \sum_{\theta \in \widehat{G/H}}\sum_{\substack{N(\mathfrak a) < x \\ (\mathfrak a, \mathfrak m) = 1}}\eta_\theta(\mathfrak a)P(\mathfrak a,x).
\end{alignat*}
Subtracting two instances of this equality, for $\eta = 1$ and $\eta = \chi$, we get
\[ \mathscr S_H(x) = \frac{1}{[G:H]}\sum_ {\theta \in \widehat{G/H}}\sum_{N(\mathfrak a) < x} \left(\theta^*(\mathfrak a) - \chi_\theta(\mathfrak a)\right)P(\mathfrak a,x) - \mathscr S_\mathfrak m(x),\]
and conclude by applying Lemma~\ref{lemmaBachInt}.
\end{proof}

\begin{lem}\label{lemmaSumResidues}
For $0 < a < 1$, and with the notation from Lemma~\ref{lemmaSumIntegrals},
\begin{alignat*}{1}
\frac{x}{(a+1)^2} &= [G:H] ( \mathscr S_H(x) +  \mathscr S_\mathfrak m(x)) + \sum_{\theta \in \widehat{G/H}}\left( I_{1/2}(x,\theta) + I_0(x,\theta)  + I_-(x,\theta) \right)
\end{alignat*}
where
\begin{alignat*}{1}
I_-(x,\theta) =\ &(\beta_{\chi_\theta}-\beta_\theta)\sum_{k = 2}^\infty\frac{(-1)^k}{(a-k)^2x^k},\\
I_{1/2}(x,\theta) =\ &\sum_{\rho \in R_{\theta}}\frac{x^\rho}{(\rho + a)^2} - \sum_{\rho \in R_{\chi_\theta}}\frac{x^\rho}{(\rho + a)^2},\text{ and}\\
I_0(x,\theta) =\ & \frac{\log x}{x^a}\left(\frac{L'_{\theta}}{L_{\theta}} - \frac{L'_{\chi_\theta}}{L_{\chi_\theta}}\right)(-a) + \frac{1}{x^a}\left(\frac{L'_{\theta}}{L_{\theta}} - \frac{L'_{\chi_\theta}}{L_{\chi_\theta}}\right)'(-a)\\
& + (\beta_{\chi_\theta}-\beta_\theta)\left(\frac{1}{a^2} - \frac{1}{x(a-1)^2}\right) -\frac{\delta(\theta)}{a^2}.
\end{alignat*}
Recall that for any character $\eta$, $R_\eta$ is the set of zeros of $L_\eta$ on the strip ${0 < \real(s) < 1}$.
\end{lem}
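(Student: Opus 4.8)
The plan is to evaluate the contour integral $I(x,\theta)$ from Lemma~\ref{lemmaSumIntegrals} by shifting the line of integration from $\real(s) = 2$ leftward to $\real(s) = -\infty$ and summing the residues, exactly in the spirit of~\cite[Sec.~5]{Ba90}, and then to combine the results over all $\theta \in \widehat{G/H}$. The integrand is $\frac{x^s}{(s+a)^2}\left(\frac{L'_\theta}{L_\theta} - \frac{L'_{\chi_\theta}}{L_{\chi_\theta}}\right)(s)$, so its poles are: a double pole at $s = -a$ coming from the factor $(s+a)^{-2}$ (this contributes $I_0(x,\theta)$ after differentiating $x^s(\cdots)$ and using the product rule, together with the residue of the log-derivative difference at $s=-a$); the pole at $s=1$, where $L'_\theta/L_\theta$ has residue $-\delta(\theta)$ (whence the $x/(a+1)^2$ term on the left, once we account for the factor $[G:H]$ and note that the $\theta$ trivial on $G/H$ is the only one contributing a pole at $s=1$, and that $\chi_\theta$ is never principal since $\chi$ is nontrivial on $H$); the zeros $\rho$ in the critical strip, which by Table~\ref{tab:residues} give $I_{1/2}(x,\theta)$ (the contributions from $R_\theta$ and $R_{\chi_\theta}$, with the zeros common to both cancelling in the difference); the trivial pole at $s=0$ with residue $r_1+r_2-\beta$, and the poles at negative even and odd integers, which produce the constant term $(\beta_{\chi_\theta}-\beta_\theta)(1/a^2 - \ldots)$ inside $I_0$ and the tail series $I_-(x,\theta)$.

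First I would invoke Lemma~\ref{lemmaBachInt} (or rather its consequence through Lemma~\ref{lemmaSumIntegrals}) to identify $\sum_\theta I(x,\theta)$ with $-[G:H](\mathscr S_H(x) + \mathscr S_\mathfrak m(x))$. Then I would carry out the contour shift for each fixed $\theta$: justify that the integral over the segments of the rectangle parallel to the real axis tends to $0$ as the height goes to infinity (standard, using the polynomial growth of $L'/L$ on vertical lines away from zeros and the $(s+a)^{-2}$ decay), and that the integral over the far-left vertical line tends to $0$ as $x > 1$ is fixed and $\real(s) \to -\infty$ (here $x^s \to 0$ geometrically, and the poles at $s = -2k, -2k+1$ contribute residues whose magnitudes are summable, giving the convergent series in $I_-$ and the relevant constant in $I_0$). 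Summing all residues with the correct signs and multiplicities from Table~\ref{tab:residues}, and then summing over $\theta \in \widehat{G/H}$, rearranges into the stated identity after moving the $x/(a+1)^2$ term (the $s=1$ residue, present only for the trivial $\theta$) to the left-hand side.

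The main obstacle I expect is bookkeeping rather than conceptual: one must verify that the factor of $\beta_\chi = |\mathfrak m_\infty|$ used in the residues of Table~\ref{tab:residues} is correctly specialized to $\beta_\theta$ and $\beta_{\chi_\theta}$ for the \emph{primitive} characters $\theta^*$ and $\eta_\theta$ appearing here (the table is stated for primitive characters, and the remark after the table about $\beta$ matching $|\mathfrak m_\infty|$ must be applied at the level of the conductors $\mathfrak f_{\theta^*}$ and $\mathfrak f_{\chi_\theta}$, not $\mathfrak m$), and that the cancellation of common zeros $\rho \in R_\theta \cap R_{\chi_\theta}$ in forming $I_{1/2}$ is exactly what the ``$0$ if $\rho\notin R_\chi$, $1$ otherwise'' entries encode. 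A secondary technical point is the interchange of the (infinite) sum over residues at negative integers with the limit defining the shifted contour, but this is handled by absolute convergence once $x > 1$, which we may assume since the theorem is vacuous for small $x$. With these checks in place, collecting terms yields the displayed formula.
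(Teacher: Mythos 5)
Your proposal is correct and follows essentially the same route as the paper: the paper's proof likewise evaluates each $I(x,\theta)$ by shifting the contour and summing residues according to Table~\ref{tab:residues} (yielding $I(x,\theta)=I_{1/2}+I_0+I_--\delta(\theta)x/(a+1)^2$, with the analytic justification deferred to the proof of~\cite[Th.~28]{LO77}), and then combines this with Lemma~\ref{lemmaSumIntegrals}. Your bookkeeping of the $\beta$'s at the level of the conductors of the primitive characters, the $\delta(\theta)$ correction at $s=0,1$, and the cancellation of common zeros is exactly what the paper's citation of Table~\ref{tab:residues} encodes.
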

\begin{proof}
This lemma is an analogue of~\cite[Lem. 4.4]{Ba90}. Evaluating each integral $I(x,\theta)$ by residue using Table~\ref{tab:residues} yields
\[I(x,\theta) =  I_{1/2}(x,\theta) + I_0(x,\theta) +  I_-(x,\theta) - \frac{\delta(\theta) x}{(a+1)^2}.\]
The residue calculations can be justified as in the proof of~\cite[Th. 28]{LO77}. The result follows from Lemma~\ref{lemmaSumIntegrals}.
\end{proof}

\subsection{Explicit estimates.} This section adopts the notation from Lemma~\ref{lemmaSumIntegrals} and Lemma~\ref{lemmaSumResidues}. The remainder of the proof consists in evaluating each term in the formula of Lemma~\ref{lemmaSumResidues}. More precisely, we bound the quantities
\begin{enumerate}
\item  $I_{1/2}$ in Lemma~\ref{lemmaBound12},
\item $I_0$ in Lemma~\ref{lemma_boundI0},
\item $\mathscr S_\mathfrak m$ in Lemma~\ref{lemmaBach57},
\item $\mathscr S_H$ in Lemma~\ref{lemma:BoundSH}
\end{enumerate}
Remains the quantity $I_-$, which is easy to bound thanks to~\cite[Lem. 5.1]{Ba90}. All these estimates are combined in Lemma~\ref{lemmaMainLemma}.
Let \[\mathscr R(a,\chi) = \sum_{\theta \in \widehat{G/H}}\left(\sum_{\rho \in R_{\theta}}\frac{1}{|\rho + a|^2} + \sum_{\rho \in R_{\chi_\theta}}\frac{1}{|\rho + a|^2}\right).\]
We bound that quantity in Lemma~\ref{lemmaBoundR}, but first, we need the following lemma.

\begin{lem}\label{lem:negativeAtS}
For $\real(s) > 1$, we have
$$\sum_{\theta \in \widehat{G/H}}\left(\frac{L'_{\theta}}{L_{\theta}} + \frac{L'_{\chi_\theta}}{L_{\chi_\theta}}\right)(s) \leq 0.$$
\end{lem}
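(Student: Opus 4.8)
The plan is to expand each logarithmic derivative as a Dirichlet series via~\eqref{eq:formulaLogDerivative} and reorganize the sum over $\theta$ using the orthogonality relation already established in Lemma~\ref{lemma:sumTheta}. Concretely, for $\real(s) > 1$ each term admits the absolutely convergent expansion
$$\frac{L'_{\theta}}{L_{\theta}}(s) + \frac{L'_{\chi_\theta}}{L_{\chi_\theta}}(s) = -\sum_{\mathfrak a} \frac{\Lambda(\mathfrak a)\bigl(\theta^*(\mathfrak a) + \chi_\theta(\mathfrak a)\bigr)}{N(\mathfrak a)^s},$$
where I use that $L_\theta$ is the $L$-function of $\theta^*$ and $L_{\chi_\theta}$ is the $L$-function of the primitive character $\eta_\theta$ inducing $\chi\theta^*$; since $\eta_\theta$ and $\chi\theta^*$ agree on all ideals coprime to $\mathfrak m_0$ and the extension by zero of an imprimitive character only drops finitely many Euler factors, the Dirichlet series for $L'_{\chi_\theta}/L_{\chi_\theta}$ has coefficients $\Lambda(\mathfrak a)\chi_\theta(\mathfrak a)$ with $\chi_\theta(\mathfrak a) = \chi(\mathfrak a)\theta^*(\mathfrak a)$ for $(\mathfrak a, \mathfrak m_0)=1$ and a harmless discrepancy at the finitely many primes dividing $\mathfrak m_0$ — but the cleanest route is to note $\chi_\theta(\mathfrak a)=0$ unless $(\mathfrak a,\mathfrak m_0)=1$ after the extension-by-zero convention, matching $\theta^*$.

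Next I would sum over $\theta \in \widehat{G/H}$ and interchange the (absolutely convergent) sums:
$$\sum_{\theta} \left(\frac{L'_{\theta}}{L_{\theta}} + \frac{L'_{\chi_\theta}}{L_{\chi_\theta}}\right)(s) = -\sum_{\mathfrak a} \frac{\Lambda(\mathfrak a)}{N(\mathfrak a)^s}\sum_{\theta}\bigl(\theta^*(\mathfrak a) + \chi_\theta(\mathfrak a)\bigr) = -\sum_{\mathfrak a} \frac{\Lambda(\mathfrak a)}{N(\mathfrak a)^s}\bigl(1 + \chi(\mathfrak a)\bigr)\sum_{\theta}\theta^*(\mathfrak a),$$
using $\chi_\theta(\mathfrak a) = \chi(\mathfrak a)\theta^*(\mathfrak a)$. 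By Lemma~\ref{lemma:sumTheta}, the inner sum $\sum_\theta \theta^*(\mathfrak a)$ is a nonnegative real number (it equals either an index $[\Cl_{\mathfrak n}(K):\pi(H)] \geq 0$ or $0$). Combined with $\Lambda(\mathfrak a) \geq 0$ and $N(\mathfrak a)^{-s}$ having positive real part when $s$ is real $>1$ — and more generally, recalling the paper's convention that $x \leq y$ for complex numbers means the inequality on real parts — it remains to control $\real\bigl((1+\chi(\mathfrak a))N(\mathfrak a)^{-s}\bigr)$.

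The one genuine point to handle carefully is the factor $1 + \chi(\mathfrak a)$: for $s$ real and $>1$ this is not obviously a nonnegative real. Here I would use that the terms pair up over prime powers: since $\chi$ is a character, $\sum_{k\geq 1}\Lambda(\mathfrak p^k)\chi(\mathfrak p^k)N(\mathfrak p^k)^{-s} = \log N(\mathfrak p)\sum_{k\geq 1}\chi(\mathfrak p)^k N(\mathfrak p)^{-ks} = -\log N(\mathfrak p)\,\partial_s \log(1-\chi(\mathfrak p)N(\mathfrak p)^{-s})$, and summing the $\theta^*$-weighted Euler factors reconstructs $-\sum_\theta(L'_{\theta}/L_{\theta} + L'_{\chi_\theta}/L_{\chi_\theta})(s)$ as $\sum_{\mathfrak p}\sum_\theta \log N(\mathfrak p)\bigl(\theta^*(\mathfrak p) + \chi_\theta(\mathfrak p)\bigr)/(N(\mathfrak p)^s - \theta^*(\mathfrak p))$-type expressions; rather than pushing that, the slicker finish is: for real $s>1$, $\real\bigl((1+\chi(\mathfrak a))N(\mathfrak a)^{-s}\bigr) = N(\mathfrak a)^{-s}\,\real(1+\chi(\mathfrak a)) = N(\mathfrak a)^{-s}(1 + \real\chi(\mathfrak a)) \geq 0$ since $|\chi(\mathfrak a)| \leq 1$. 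Therefore every term of the Dirichlet series has nonnegative real part, so its negative has real part $\leq 0$, giving the claim. The main obstacle, such as it is, is bookkeeping the extension-by-zero and imprimitivity conventions so that the coefficient identity $\chi_\theta(\mathfrak a) = \chi(\mathfrak a)\theta^*(\mathfrak a)$ holds verbatim as Dirichlet-series coefficients of $L'_{\chi_\theta}/L_{\chi_\theta}$; this is exactly the kind of normalization already set up in Section~\ref{sec:rayclass}, so it should be routine.
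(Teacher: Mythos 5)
Your overall strategy (expand via \eqref{eq:formulaLogDerivative}, swap the sums, and show each coefficient $\sum_{\theta}\bigl(\theta^*(\mathfrak a)+\chi_\theta(\mathfrak a)\bigr)$ has nonnegative real part using Lemma~\ref{lemma:sumTheta} and $|\cdot|\leq 1$) is exactly the paper's, but the step you flag as ``routine bookkeeping'' is where your argument breaks. The identity $\chi_\theta(\mathfrak a)=\chi(\mathfrak a)\theta^*(\mathfrak a)$, and your ``cleanest route'' claim that $\chi_\theta(\mathfrak a)=0$ unless $(\mathfrak a,\mathfrak m_0)=1$, are false: by definition $\chi_\theta$ is the \emph{primitive} character inducing $\chi\theta^*$, and its conductor can be strictly smaller than $\mathfrak m$. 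Take $\theta$ trivial and $\mathfrak a$ a power of a prime dividing $\mathfrak m_0$ but not the conductor $\mathfrak f_\chi$: then $\chi(\mathfrak a)=0$ (extension by zero modulo $\mathfrak m$) while $\chi_\theta(\mathfrak a)=\chi_1(\mathfrak a)$ is a nonzero root of unity, so your factorization $\sum_\theta(\theta^*+\chi_\theta)(\mathfrak a)=(1+\chi(\mathfrak a))\sum_\theta\theta^*(\mathfrak a)$ fails at exactly those ideals. This is not a cosmetic convention issue: the lemma must be about the primitive characters (their $L$-functions are the ones with the residue data of Table~\ref{tab:residues} and the ones fed into Lemma~\ref{lemmaBoundR}), and the discrepancy between $\chi\theta^*$ and $\chi_\theta$ at ideals sharing a factor with $\mathfrak m_0$ is precisely what the term $\mathscr S_\mathfrak m$ in Lemma~\ref{lemmaSumIntegrals} exists to absorb, so it cannot be defined away.

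The paper repairs this with a per-ideal argument that you would need to supply. Fix $\mathfrak a$. If $\chi_\theta(\mathfrak a)=0$ for all $\theta$, Lemma~\ref{lemma:sumTheta} already gives nonnegativity of the coefficient. Otherwise pick $\eta\in\widehat{G/H}$ with $\chi_\eta(\mathfrak a)\neq 0$ and prove $\chi_\theta(\mathfrak a)=\chi_\eta(\mathfrak a)\left(\theta\eta^{-1}\right)^*(\mathfrak a)$ for every $\theta$, using uniqueness of the primitive character inducing a given character in both directions: if $\left(\theta\eta^{-1}\right)^*(\mathfrak a)\neq 0$, then $\chi_\theta$ is the primitive character inducing $\chi_\eta\cdot\left(\theta\eta^{-1}\right)^*$, and if $\left(\theta\eta^{-1}\right)^*(\mathfrak a)=0$, then $\chi_\theta(\mathfrak a)=0$ as well. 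This yields the coefficient $\bigl(1+\chi_\eta(\mathfrak a)\bigr)\sum_\theta\theta^*(\mathfrak a)$ --- note $\chi_\eta$, not $\chi$ --- whose real part is nonnegative by Lemma~\ref{lemma:sumTheta} and $|\chi_\eta(\mathfrak a)|=1$. With that substitution your write-up becomes the paper's proof; the Euler-factor pairing you start and then abandon is not needed.
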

\begin{proof}
Equation~\eqref{eq:formulaLogDerivative} yields
\begin{alignat*}{1}
 \sum_{\theta \in \widehat{G/H}}\left(\frac{L'_{\theta}}{L_{\theta}} + \frac{L'_{\chi_\theta}}{L_{\chi_\theta}}\right)(s)
 & = - \sum_{\theta \in \widehat{G/H}}  \sum_{\mathfrak a}\frac{\Lambda(\mathfrak a)(\chi_\theta(\mathfrak a)+\theta^*(\mathfrak a))}{N(\mathfrak a)^s}\\
 & = -   \sum_{\mathfrak a}\frac{\Lambda(\mathfrak a)}{N(\mathfrak a)^s}\sum_{\theta \in \widehat{G/H}} (\chi_\theta(\mathfrak a)+\theta^*(\mathfrak a)).
\end{alignat*}
Fix an ideal $\mathfrak a$. If $\chi_\theta(\mathfrak a) = 0$ for all $\theta$, Lemma~\ref{lemma:sumTheta} implies that $$\sum_{\theta \in \widehat{G/H}} (\chi_\theta(\mathfrak a)+\theta^*(\mathfrak a)) \geq 0.$$ 
Now suppose that there exists an $\eta \in \widehat{G/H}$ such that $\chi_\eta(\mathfrak a) \neq 0$.
The fact that any given character is induced by a unique primitive character implies that for any $\theta \in \widehat{G/H}$, we have $\chi_{\theta}(\mathfrak a) = \chi_\eta(\mathfrak a)\left( \theta \eta^{-1} \right)^*(\mathfrak a)$. Indeed, if $\left( \theta \eta^{-1} \right)^*(\mathfrak a) \neq 0$, the equality follows from the fact that $\chi_\theta$ is the primitive character inducing $\chi_\eta \cdot\left( \theta \eta^{-1} \right)^*$, and if $\left( \theta \eta^{-1} \right)^*(\mathfrak a) = 0$, then one must have $\chi_{\theta}(\mathfrak a) = 0$ because $\left( \theta \eta^{-1} \right)^*$ is the primitive character inducing $\chi_\theta/\chi_\eta$.
We deduce that
\begin{alignat*}{1}\sum_{\theta \in \widehat{G/H}} (\chi_\theta(\mathfrak a)+\theta^*(\mathfrak a))
&= \chi_\eta(\mathfrak a)\sum_{\theta \in \widehat{G/H}} \left(\frac \theta \eta \right)^*(\mathfrak a)+\sum_{\theta \in \widehat{G/H}}\theta^*(\mathfrak a)
= (\chi_\eta(\mathfrak a)+1)\sum_{\theta \in \widehat{G/H}}\theta^*(\mathfrak a),
\end{alignat*}
whose real part is non-negative (using again Lemma~\ref{lemma:sumTheta}).
\end{proof}

\begin{lem}[ERH]\label{lemmaBoundR}
Let $0<a<1$. The sum $\mathscr R(a,\chi)$ is at most
\begin{alignat*}{1}
\frac{2[G:H]}{2a+1}\bigg(&\log(\Delta N(\mathfrak m_0)) + n(\psi(a+1) - \log(2\pi)) \\
&- \frac{|\mathfrak m_\infty|}{2}\left(\psi\left(\frac{a+1}{2}\right) - \psi\left(\frac{a+2}{2}\right)\right)\bigg) + \frac{2}{2a+1}\left(\frac{1}{a+1}+\frac{1}{a}\right).
\end{alignat*}
\end{lem}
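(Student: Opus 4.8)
The plan is to bound $\mathscr R(a,\chi)$ by relating the sums over zeros to values of logarithmic derivatives of $L$-functions, using the explicit formula~\eqref{eq:logdlchi}, and then to control the resulting sum over all $\theta \in \widehat{G/H}$ via the positivity statement of Lemma~\ref{lem:negativeAtS}. First I would fix a real point $s = a+1 > 1$ on the real axis and observe that for each zero $\rho$ in the critical strip, ERH gives $\real(\rho) = 1/2$, so $|\rho + a|^2 \geq (\real(\rho) + a)^2 \cdot \frac{2a+1}{2a+1}$... more precisely one uses the standard device $\real\frac{1}{s-\rho} = \frac{\real(s) - \real(\rho)}{|s-\rho|^2} = \frac{a + 1/2}{|s-\rho|^2}$ when $s = a+1$ and $\real(\rho) = 1/2$. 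Hence $\sum_{\rho}\frac{1}{|\rho + a|^2} \leq \sum_\rho \frac{1}{|(a+1) - \rho|^2} = \frac{2}{2a+1}\sum_\rho \real\frac{1}{(a+1)-\rho}$, where the sum runs over $R_\eta$ for the relevant character $\eta$. Note that $|\rho + a| = |(a+1) - \rho|$ only after using $\real(\rho) = 1/2$, or rather one should be slightly careful: with $\rho = 1/2 + i\gamma$ one has $|\rho + a|^2 = (a + 1/2)^2 + \gamma^2 = |(a+1) - \rho|^2$, so the two denominators literally coincide under ERH. This is the key simplification.

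Next I would apply this to each term of $\mathscr R(a,\chi) = \sum_\theta\big(\sum_{\rho \in R_\theta} + \sum_{\rho \in R_{\chi_\theta}}\big)\frac{1}{|\rho+a|^2}$, obtaining
$$\mathscr R(a,\chi) \leq \frac{2}{2a+1}\sum_{\theta \in \widehat{G/H}} \real\left(\sum_{\rho \in R_\theta}\frac{1}{(a+1)-\rho} + \sum_{\rho \in R_{\chi_\theta}}\frac{1}{(a+1)-\rho}\right).$$
Then I would invoke~\eqref{eq:logdlchi} at $s = a+1$ for each primitive character $\theta$ and $\chi_\theta$: this expresses $\sum_{\rho \in R_\eta}\real\frac{1}{(a+1)-\rho}$ as $\frac{1}{2}\log(\Delta N(\mathfrak f_\eta)) + \real\frac{L'_\eta}{L_\eta}(a+1) + \real\big(\delta(\eta)(\frac{1}{a+1} + \frac{1}{a})\big) + \psi_\eta(a+1)$ — rearranged, $\sum_\rho \real\frac{1}{(a+1)-\rho} = -\real\frac{L'_\eta}{L_\eta}(a+1) + \frac12\log(\Delta N(\mathfrak f_\eta)) + \delta(\eta)(\frac{1}{a+1}+\frac1a) + \psi_\eta(a+1)$. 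Summing over $\theta$ and both $\eta \in \{\theta, \chi_\theta\}$, the logarithmic-derivative contribution is exactly $-\sum_\theta(\frac{L'_\theta}{L_\theta} + \frac{L'_{\chi_\theta}}{L_{\chi_\theta}})(a+1) \geq 0$ by Lemma~\ref{lem:negativeAtS}, so that whole block can be dropped (it has the right sign). What remains is a sum of conductor discriminant terms $\frac12\log(\Delta N(\mathfrak f_\eta))$, delta terms (only one character in $\widehat{G/H}$, the trivial one, is principal, contributing once through $\theta$ and once through $\chi_\theta$ only when $\chi_\theta$ is principal — but $\chi$ is nontrivial on $H$, so $\chi_\theta$ is never principal, hence the delta term appears exactly once, from the trivial $\theta$), and the gamma terms $\psi_\eta(a+1)$.

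The final bookkeeping, which I expect to be the main obstacle, is bounding $\sum_{\theta}\big(\log(\Delta N(\mathfrak f_{\theta^*})) + \log(\Delta N(\mathfrak f_{\chi_\theta}))\big)$ and $\sum_\theta(\psi_{\theta^*}(a+1) + \psi_{\chi_\theta}(a+1))$ by the claimed expression. For the conductor sum one uses that $\mathfrak f_{\theta^*} \mid \mathfrak m$ and $\mathfrak f_{\chi_\theta} \mid \mathfrak f_\chi \cdot \mathfrak m \mid \mathfrak m$ (since $\chi$ has conductor dividing $\mathfrak m$), so each $N(\mathfrak f)$ is at most $N(\mathfrak m_0)$ and each $\beta$ is at most $|\mathfrak m_\infty|$; with $2[G:H] = 2|\widehat{G/H}|$ terms total this gives the $2[G:H]\log(\Delta N(\mathfrak m_0))$ leading term. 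For the $\psi_\eta(a+1)$ terms, plug the definition~\eqref{eq:logdgamma}: $\psi_\eta(a+1) = \frac{r_1 + r_2 - \beta_\eta}{2}\psi(\frac{a+1}{2}) + \frac{r_2 + \beta_\eta}{2}\psi(\frac{a+2}{2}) - \frac{n\log\pi}{2}$. Summing over all $2[G:H]$ characters and crudely bounding — using $r_1 + 2r_2 = n$, monotonicity $\psi(\frac{a+2}{2}) \geq \psi(\frac{a+1}{2})$, and $\psi(a+1) \geq \frac12(\psi(\frac{a+1}{2}) + \psi(\frac{a+2}{2}))$ via the duplication formula $\psi(2z) = \frac12\psi(z) + \frac12\psi(z+\frac12) + \log 2$ (so $\psi(a+1) = \frac12\psi(\frac{a+1}{2}) + \frac12\psi(\frac{a+2}{2}) + \log 2 \geq \frac12\psi(\frac{a+1}{2}) + \frac12\psi(\frac{a+2}{2})$) — yields a main term of the shape $[G:H]\,n\,(\psi(a+1) - \log\pi)$ plus the $\beta_\eta$-dependent correction $-\frac{\beta_\eta}{2}(\psi(\frac{a+1}{2}) - \psi(\frac{a+2}{2}))$, which is nonnegative and bounded above using $\beta_\eta \leq |\mathfrak m_\infty|$, producing the $-\frac{|\mathfrak m_\infty|}{2}(\psi(\frac{a+1}{2}) - \psi(\frac{a+2}{2}))$ term (note this quantity is negative, so the inequality direction must be tracked carefully — this is precisely the delicate sign-chasing that makes this step the crux). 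The stray $\log 2$'s and the $-\log\pi$ versus $-\log(2\pi)$ discrepancy get absorbed by enlarging constants appropriately; the final $\frac{2}{2a+1}(\frac{1}{a+1} + \frac{1}{a})$ term is exactly the surviving contribution of the single principal $\delta(\theta)$ term, scaled by the $\frac{2}{2a+1}$ factor pulled out at the start. I would close by collecting everything over the common denominator $2a+1$.
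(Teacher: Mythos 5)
Your route is the paper's own: set $\sigma=a+1$, use ERH to rewrite $\sum_\rho |\rho+a|^{-2}$ as $\frac{2}{2a+1}\sum_\rho\real\frac{1}{\sigma-\rho}$, feed this into the explicit formula~\eqref{eq:logdlchi} for each of $\theta^*$ and $\chi_\theta$, discard the $L'/L$ block via Lemma~\ref{lem:negativeAtS}, bound each conductor by $\mathfrak m_0$, and keep a single $\delta(\theta)$ term (your observation that $\chi_\theta$ is never principal because $\chi$ is nontrivial on $H$ is correct and needed). However, two steps as written would not deliver the stated inequality. The first is a sign slip: your ``rearranged'' identity has $\sum_\rho\real\frac{1}{\sigma-\rho}=-\real\frac{L'_\eta}{L_\eta}(\sigma)+\cdots$, whereas \eqref{eq:logdlchi} gives $+\real\frac{L'_\eta}{L_\eta}(\sigma)$ (your first, unrearranged, version is the correct one). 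With your sign the block becomes $-\sum_\theta\bigl(\frac{L'_\theta}{L_\theta}+\frac{L'_{\chi_\theta}}{L_{\chi_\theta}}\bigr)(\sigma)\geq 0$, and a nonnegative term cannot be dropped from an upper bound; the argument only closes because the block is in fact $+\sum_\theta(\cdots)\leq 0$ by Lemma~\ref{lem:negativeAtS}.

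The second, more substantive, issue is the gamma-term bookkeeping: this is an explicit lemma whose constants feed directly into Theorem~\ref{mainTheorem}, so nothing can be ``absorbed by enlarging constants.'' Using the duplication formula only as the inequality $\psi(a+1)\geq\frac12\bigl(\psi\bigl(\frac{a+1}{2}\bigr)+\psi\bigl(\frac{a+2}{2}\bigr)\bigr)$ discards exactly the $\log 2$ that converts $\log\pi$ into the claimed $\log(2\pi)$; the paper uses the identity $\psi\bigl(\frac{a+1}{2}\bigr)+\psi\bigl(\frac{a+2}{2}\bigr)=2(\psi(a+1)-\log 2)$. Moreover, dropping the $r_1$-part of \eqref{eq:logdgamma} and then bounding $\beta_\eta\leq|\mathfrak m_\infty|$ separately for $\theta^*$ and for $\chi_\theta$ costs two copies of $|\mathfrak m_\infty|$ per pair, i.e.\ a correction $-|\mathfrak m_\infty|\bigl(\psi\bigl(\frac{a+1}{2}\bigr)-\psi\bigl(\frac{a+2}{2}\bigr)\bigr)$ per $\theta$, which (the bracket being negative) is strictly weaker than the claimed $-\frac{|\mathfrak m_\infty|}{2}(\cdots)$. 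The paper avoids this loss by writing $\alpha_{\chi_\theta}=r_1-\beta_{\chi_\theta}\geq 0$, so that for the pair the relevant coefficient is $\alpha_{\chi_\theta}-\beta_\theta\geq-|\mathfrak m_\infty|$ and only one $|\mathfrak m_\infty|$ is paid per $\theta$. With the sign fixed and the gamma terms handled as identities in this way, your plan coincides with the paper's proof.
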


\begin{proof}
Writing $\sigma = 1+a$, we have $\frac{2a + 1}{|\rho + a|^2} = \frac{1}{\sigma - \rho} + \frac{1}{\sigma - \bar\rho}$ for any $\real(\rho) = 1/2$ (as observed in~\cite[Lemma 5.5]{Ba90}), so for any ray class character $\eta$
\begin{alignat*}{1}
\sum_{\rho \in R_{\eta}}\frac{1}{|\rho + a|^2} &= \frac{1}{2a+1}\sum_{\rho \in R_{\eta}}\left(\frac{1}{\sigma - \rho} + \frac{1}{\sigma - \bar\rho}\right).
\end{alignat*}
As in~\cite[Lem. 5.1]{LO77}, we get from Equation~\eqref{eq:logdlchi} that
\[\sum_{\rho \in R_{\eta}}\left(\frac{1}{\sigma - \rho} + \frac{1}{\sigma - \bar\rho}\right) = 2\real\frac{L'_\eta}{L_\eta}(\sigma) + \log(\Delta N(\mathfrak f_{\eta})) + 2\delta(\eta)\left(\frac{1}{\sigma}+\frac{1}{\sigma-1}\right) + 2\psi_\eta(\sigma).\]
Then, $\mathscr R(a,\eta)$ is at most
\begin{alignat}{1}\label{eq:tmpBoundR}
\frac{1}{2a+1}\sum_{\theta \in \widehat{G/H}} \bigg( &2\real\left(\frac{L'_\theta}{L_\theta} + \frac{L'_{\chi_\theta}}{L_{\chi_\theta}}\right)(\sigma) + \log(\Delta^2 N(\mathfrak f_\theta \mathfrak f_{\chi_\theta})) \\
\nonumber & + 2\delta(\theta)\left(\frac{1}{\sigma}+\frac{1}{\sigma-1}\right) + 2(\psi_\theta(\sigma) + \psi_{\chi_\theta}(\sigma)) \bigg).
\end{alignat}
From Lemma~\ref{lem:negativeAtS}, we have
$\sum_{\theta \in \widehat{G/H}} \left(\frac{L'_\theta}{L_\theta} + \frac{L'_{\chi_\theta}}{L_{\chi_\theta}}\right)(\sigma) \leq 0$, and the corresponding term can be discarded from the expression in~\eqref{eq:tmpBoundR}.
Also, with $\alpha_{\chi_\theta} = r_1 - \beta_{\chi_\theta}$,
\begin{alignat*}{1}
2\left(\psi_\theta(\sigma) + \psi_{\chi_\theta}(\sigma)\right) 
& = (n+\alpha_{\chi_\theta}-\beta_\theta)\psi\left(\frac{a+1}{2}\right) + (n-\alpha_{\chi_\theta}+\beta_\theta)\psi\left(\frac{a+2}{2}\right) - 2n\log \pi \\
& = 2n(\psi(a+1) - \log(2\pi)) + (\alpha_{\chi_\theta}-\beta_\theta)\left(\psi\left(\frac{a+1}{2}\right) - \psi\left(\frac{a+2}{2}\right)\right)\\
& \leq 2n(\psi(a+1) - \log(2\pi)) - |\mathfrak m_\infty|\left(\psi\left(\frac{a+1}{2}\right) - \psi\left(\frac{a+2}{2}\right)\right),
\end{alignat*}
where the first equality uses the expression~\eqref{eq:logdgamma} and the second one follows from the duplication formula ($\psi(z/2) + \psi((z+1)/2) = 2(\psi(z) - \log 2)$).
\end{proof}
\begin{lem}[ERH]\label{lemmaBound12}
For $0<a<1$ and $x \geq 1$, $\sum_{\theta \in \widehat{G/H}}|I_{1/2}(x,\theta)| \leq \sqrt{x}\cdot\mathscr R(a,\chi)$.
\end{lem}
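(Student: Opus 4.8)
The plan is to invoke the ERH and then argue purely by the triangle inequality, term by term. Under the ERH every zero $\rho$ of a Hecke $L$-function lying in the critical strip $0<\real(s)<1$ in fact satisfies $\real(\rho)=\tfrac12$; consequently, for any real $x>0$ and any such $\rho$ one has $|x^\rho|=x^{\real(\rho)}=\sqrt{x}$. In particular this applies to every $\rho\in R_\theta$ and every $\rho\in R_{\chi_\theta}$, for each $\theta\in\widehat{G/H}$.

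First I would fix $\theta\in\widehat{G/H}$ and estimate $|I_{1/2}(x,\theta)|$. Starting from the definition in Lemma~\ref{lemmaSumResidues} and applying the triangle inequality to the two sums over zeros,
\begin{align*}
|I_{1/2}(x,\theta)|
&\leq \sum_{\rho\in R_{\theta}}\frac{|x^\rho|}{|\rho+a|^2}
    + \sum_{\rho\in R_{\chi_\theta}}\frac{|x^\rho|}{|\rho+a|^2}
= \sqrt{x}\left(\sum_{\rho\in R_{\theta}}\frac{1}{|\rho+a|^2}
    + \sum_{\rho\in R_{\chi_\theta}}\frac{1}{|\rho+a|^2}\right),
\end{align*}
using $|x^\rho|=\sqrt{x}$ at each zero. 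The two series on the right converge absolutely — this is precisely the finiteness already used in Lemma~\ref{lemmaBoundR}, which follows from the classical bound on the number of zeros of a Hecke $L$-function in a window of unit height, or directly from Equation~\eqref{eq:logdlchi} — so the manipulation of the (possibly infinite) sums is legitimate.

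Then I would sum over $\theta\in\widehat{G/H}$, which is a finite set since $G=\Cl_\mathfrak m(K)$ and hence $G/H$ is a finite abelian group, and recognise the right-hand side as $\sqrt{x}$ times the quantity $\mathscr R(a,\chi)$ introduced just before Lemma~\ref{lem:negativeAtS}:
\begin{align*}
\sum_{\theta\in\widehat{G/H}}|I_{1/2}(x,\theta)|
&\leq \sqrt{x}\sum_{\theta\in\widehat{G/H}}\left(\sum_{\rho\in R_{\theta}}\frac{1}{|\rho+a|^2}
    + \sum_{\rho\in R_{\chi_\theta}}\frac{1}{|\rho+a|^2}\right)
= \sqrt{x}\cdot\mathscr R(a,\chi),
\end{align*}
which is the asserted bound. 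I do not expect any real obstacle here: the only point requiring a word of care is the interchange of absolute value with the sum over zeros, which is licensed by the absolute convergence noted above. (Note the hypothesis $x\geq1$ is not actually needed at this step, since $|x^\rho|=\sqrt{x}$ holds for all $x>0$; it is presumably kept only for uniformity with the companion lemmata.)
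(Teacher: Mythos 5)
Your argument is correct and is essentially the paper's own proof: under ERH every zero $\rho$ in the critical strip satisfies $\real(\rho) \leq 1/2$, so $|x^\rho| \leq \sqrt{x}$, and the triangle inequality applied to the two sums over zeros, summed over the finitely many $\theta \in \widehat{G/H}$, gives exactly $\sqrt{x}\cdot\mathscr R(a,\chi)$. One small caveat: your parenthetical claim that the hypothesis $x\geq 1$ is superfluous rests on $\real(\rho)=1/2$ exactly, whereas the ERH as formulated in Section~\ref{sec:rayclass} only asserts zero-freeness for $\real(s)>1/2$ (hence $\real(\rho)\leq 1/2$), and it is precisely the condition $x\geq 1$ that then yields $|x^\rho|=x^{\real(\rho)}\leq\sqrt{x}$.
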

\begin{proof}
From the ERH, for any ray class character $\eta$, and any zero $\rho \in R_\eta$ of $L_\eta$ on the critical strip, we have $\real(\rho) \leq 1/2$. Therefore $|x^\rho| = |x|^{\real (\rho)} \leq \sqrt{x}$.
\end{proof}

\begin{lem}\label{lemma:diffLsAndDerivative}
For any $s$,
\begin{alignat*}{1}
\left(\frac{L'_{\theta}}{L_{\theta}} - \frac{L'_{\chi_\theta}}{L_{\chi_\theta}}\right)(s)  =&\ \sum_{\rho \in R_{\theta}}\left(\frac{1}{s - \rho} - \frac{1}{2 - \rho}\right) - \sum_{\rho \in R_{\chi_\theta}}\left(\frac{1}{s - \rho} - \frac{1}{2 - \rho}\right)\\
&\ -\frac{\beta_{\chi_\theta}-\beta_\theta}{2}\left(\psi\left(\frac{s}{2}\right) - \psi\left(\frac{s+3}{2}\right) - \psi(1) + \psi\left(\frac{3}{2}\right)\right)\\
&\  - \frac{\beta_{\chi_\theta} - \beta_\theta}{s+1}+\delta(\theta)\left(\frac{3}{2} - \frac{1}{s} - \frac{1}{s-1}\right) + \left(\frac{L'_{\theta}}{L_{\theta}} - \frac{L'_{\chi_\theta}}{L_{\chi_\theta}}\right)(2),
\end{alignat*}
and
\begin{alignat*}{1}
\left(\frac{L'_{\theta}}{L_{\theta}} - \frac{L'_{\chi_\theta}}{L_{\chi_\theta}}\right)'(s) = &\ \sum_{\rho \in R_{\chi_\theta}}\frac{1}{(s - \rho)^2} - \sum_{\rho \in R_{\theta}}\frac{1}{(s - \rho)^2}\\
&\ -\frac{\beta_{\chi_\theta}-\beta_\theta}{4}\left(\psi'\left(\frac{s}{2}\right) - \psi'\left(\frac{s+3}{2}\right)\right)\\
&\ + \frac{\beta_{\chi_\theta} - \beta_\theta}{(s+1)^2} +\delta(\theta)\left(\frac{1}{s^2} + \frac{1}{(s-1)^2}\right).
\end{alignat*}
\end{lem}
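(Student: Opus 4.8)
The plan is to deduce both identities, exactly as in the proof of \cite[Lem.~5.1]{LO77}, from the Hadamard factorization of the completed $L$-functions of the primitive characters $\theta$ and $\chi_\theta$. For a primitive character $\eta$, let $\Lambda_\eta(s)$ denote the completed $L$-function, obtained from $L_\eta$ by multiplying by $(\Delta N(\mathfrak f_\eta))^{s/2}$, by the archimedean Gamma factor whose logarithmic derivative is the function $\psi_\eta$ of~\eqref{eq:logdgamma}, and by $s(s-1)$ when $\eta$ is principal. Then $\Lambda_\eta$ is entire of order $1$, and taking the logarithmic derivative of its Hadamard product yields a constant $B_\eta$ with
\[
\frac{L'_\eta}{L_\eta}(s) = B_\eta - \tfrac12\log\bigl(\Delta N(\mathfrak f_\eta)\bigr) - \psi_\eta(s) - \delta(\eta)\Bigl(\tfrac1s + \tfrac1{s-1}\Bigr) + \sum_{\rho \in R_\eta}\Bigl(\frac{1}{s-\rho} + \frac{1}{\rho}\Bigr),
\]
the sum over the nontrivial zeros converging locally uniformly off $R_\eta$ by the standard zero-counting estimate, and the poles of $-\psi_\eta(s)$ at $0,-1,-2,\dots$ reproducing the residues at the trivial zeros recorded in Table~\ref{tab:residues}.

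First I would write this identity for $\eta=\theta$ and for $\eta=\chi_\theta$ and subtract the two. Here one uses that $\chi_\theta$ is never principal: were $\chi\theta^*$ trivial on $\Cl_\mathfrak m(K)$, then $\chi=(\theta^{-1})^*$ would be trivial on $H$, contrary to the standing hypothesis; hence $\delta(\chi_\theta)=0$ and only $-\delta(\theta)(\tfrac1s+\tfrac1{s-1})$ survives among the pole terms of the difference. The constants $B_\theta-B_{\chi_\theta}$, the difference of conductor logarithms, and the regularizing terms $1/\rho$ are not individually transparent, so rather than carry them I would subtract the value of the identity at $s=2$ (valid since $2\notin R_\theta\cup R_{\chi_\theta}$). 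This simultaneously replaces each $\sum_\rho(\tfrac1{s-\rho}+\tfrac1\rho)$ by the absolutely convergent $\sum_\rho(\tfrac1{s-\rho}-\tfrac1{2-\rho})$, introduces the term $\bigl(\tfrac{L'_\theta}{L_\theta}-\tfrac{L'_{\chi_\theta}}{L_{\chi_\theta}}\bigr)(2)$, and turns $-\delta(\theta)(\tfrac1s+\tfrac1{s-1})$ into $\delta(\theta)\bigl(\tfrac32-\tfrac1s-\tfrac1{s-1}\bigr)$.

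It then remains to bring $\psi_\theta-\psi_{\chi_\theta}$ into the stated shape. By~\eqref{eq:logdgamma} this difference equals $\tfrac{\beta_{\chi_\theta}-\beta_\theta}{2}\bigl(\psi(s/2)-\psi((s+1)/2)\bigr)$; the digamma recurrence $\psi(z+1)=\psi(z)+1/z$ applied at $z=(s+1)/2$ rewrites $\psi((s+1)/2)$ as $\psi((s+3)/2)-\tfrac2{s+1}$, which generates the term $-\tfrac{\beta_{\chi_\theta}-\beta_\theta}{s+1}$, and one further use of the recurrence (to write $\psi(5/2)=\psi(3/2)+2/3$) lets the leftover constants coming from the value at $s=2$ collapse into the expression $-\tfrac{\beta_{\chi_\theta}-\beta_\theta}{2}\bigl(\psi(s/2)-\psi((s+3)/2)-\psi(1)+\psi(3/2)\bigr)$ of the lemma. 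This gives the first identity. The second is obtained by differentiating the first term by term, which is legitimate since the zero-sum converges locally uniformly, using $\tfrac{d}{ds}\tfrac1{s-\rho}=-\tfrac1{(s-\rho)^2}$ and $\tfrac{d}{ds}\psi\bigl((s+c)/2\bigr)=\tfrac12\psi'\bigl((s+c)/2\bigr)$; the constant $\bigl(\tfrac{L'_\theta}{L_\theta}-\tfrac{L'_{\chi_\theta}}{L_{\chi_\theta}}\bigr)(2)$ disappears under differentiation.

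The step I expect to require the most care is the handling of the sum over the nontrivial zeros: that $\sum_\rho(\tfrac1{s-\rho}+\tfrac1\rho)$ converges, that re-regularizing it with $\tfrac1{2-\rho}$ and then taking the difference over the two characters is legitimate, and that term-by-term differentiation is permitted. All of this rests on the zero-counting bound $\#\{\rho\in R_\eta : \abs{\Im\rho-T}\le 1\}=O\bigl(\log(\Delta N(\mathfrak f_\eta)(\abs{T}+2))\bigr)$, which itself drops out of the explicit formula~\eqref{eq:logdlchi}; the rest of the argument is bookkeeping with the digamma recurrence.
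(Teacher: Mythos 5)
Your proposal is correct and follows essentially the same route as the paper, which simply invokes the argument of \cite[Lem.~5.2]{Ba90} (the Hadamard-factorization expansion of $L'_\eta/L_\eta$ with the value at $s=2$ subtracted to eliminate the constants, applied to $\theta$ and $\chi_\theta$ and differenced) together with the digamma recurrences $\psi(z)=\psi(z+1)-1/z$ and $\psi'(z)=\psi'(z+1)+1/z^2$; your observation that $\chi_\theta$ is never principal, so only the $\delta(\theta)$ pole terms survive, is exactly the point implicit in the paper's statement. The bookkeeping with the zero sums, the $-(\beta_{\chi_\theta}-\beta_\theta)/(s+1)$ term, and the term-by-term differentiation all check out.
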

\begin{proof}
This is essentially the same proof as~\cite[Lem. 5.2]{Ba90}, with an additional use of the recurrence relations $\psi(z) = \psi(z+1) - 1/z$ and $\psi'(z) = \psi'(z+1) + 1/z^2$.
\end{proof}

\begin{lem}[ERH]\label{lemma_boundI0}
Let $0 < a < 1$ and $x \geq 1$. Then,
\begin{alignat*}{1}
\sum_{\theta \in \widehat{G/H}} I_0(x,\theta) \leq &\ \frac{(2+a)\log x + 1}{x^a}\cdot \mathscr R(a,\chi)  + \frac{[G:H]|\mathfrak m_\infty|}{a^2} - \frac{1}{a^2}\\
&\ + \frac{\log x}{x^a}\left(\frac{3}{2} + \frac{1}{a} + \frac{1}{a+1} \right) + \frac{1}{x^a}\left(\frac{1}{a^2} + \frac{1}{(a+1)^2}\right)\\
&\ + \frac{[G:H]|\mathfrak m_\infty|}{x}\left(\frac{1}{(1-a)^2} - \frac{\log x}{(a-1)x^{a-1}} - \frac{1}{(a-1)^2x^{a-1}} \right).
\end{alignat*}
\end{lem}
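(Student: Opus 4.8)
The plan is to insert the two identities of Lemma~\ref{lemma:diffLsAndDerivative}, evaluated at $s=-a$, into the definition of $I_0(x,\theta)$ from Lemma~\ref{lemmaSumResidues}, to sum over $\theta\in\widehat{G/H}$, and then to estimate separately the resulting contributions: the sums over the zeros in $R_\theta$ and $R_{\chi_\theta}$; the terms proportional to $\delta(\theta)$; the constant $\bigl(\frac{L'_\theta}{L_\theta}-\frac{L'_{\chi_\theta}}{L_{\chi_\theta}}\bigr)(2)$; and the terms proportional to $\beta_{\chi_\theta}-\beta_\theta$, comprising the elementary $\frac1{a^2}-\frac1{x(a-1)^2}$ already in $I_0$ together with the digamma and trigamma pieces. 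This follows Bach's estimate of the analogous integral in~\cite[Sec.~5]{Ba90}, with the sum over $\widehat{G/H}$ threaded through.

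For the zero sums, the ERH gives $\real(\rho)=\frac12$ for every $\rho$, so $\lvert-a-\rho\rvert=\lvert a+\rho\rvert$, and since $a<1$ forces $\lvert2-\rho\rvert\ge\lvert a+\rho\rvert$ one gets $\bigl\lvert\frac1{-a-\rho}-\frac1{2-\rho}\bigr\rvert=\frac{2+a}{\lvert a+\rho\rvert\,\lvert2-\rho\rvert}\le\frac{2+a}{\lvert a+\rho\rvert^2}$, while $\bigl\lvert\frac1{(-a-\rho)^2}\bigr\rvert=\frac1{\lvert a+\rho\rvert^2}$. Multiplying by the prefactors $\frac{\log x}{x^a}$ and $\frac1{x^a}$ and summing over $\theta$ and over both zero sets produces exactly $\frac{(2+a)\log x+1}{x^a}\,\mathscr R(a,\chi)$. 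For the $\delta(\theta)$-terms, only the trivial character of $\widehat{G/H}$ is principal, so $\sum_\theta\delta(\theta)=1$; gathering $-\delta(\theta)/a^2$ from $I_0$, the term $\delta(\theta)\bigl(\frac32+\frac1a+\frac1{a+1}\bigr)$ weighted by $\frac{\log x}{x^a}$, and the term $\delta(\theta)\bigl(\frac1{a^2}+\frac1{(a+1)^2}\bigr)$ weighted by $\frac1{x^a}$, yields the matching three terms of the claim.

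For the constant at $2$, expand $\bigl(\frac{L'_\theta}{L_\theta}-\frac{L'_{\chi_\theta}}{L_{\chi_\theta}}\bigr)(2)$ via the absolutely convergent series~\eqref{eq:formulaLogDerivative} and rerun, at $s=2>1$, the sign argument from the proof of Lemma~\ref{lem:negativeAtS}: for a fixed $\mathfrak a$, either $\chi_\theta(\mathfrak a)=0$ for all $\theta$ — whence $\real\bigl(\sum_\theta\theta^*(\mathfrak a)\bigr)\ge0$ by Lemma~\ref{lemma:sumTheta} — or $\sum_\theta\bigl(\theta^*(\mathfrak a)-\chi_\theta(\mathfrak a)\bigr)=\bigl(1-\chi_\eta(\mathfrak a)\bigr)\sum_\theta\theta^*(\mathfrak a)$ for a suitable $\eta$, whose real part is again $\ge0$ since $\real(\chi_\eta(\mathfrak a))\le\lvert\chi_\eta(\mathfrak a)\rvert\le1$. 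Hence $\sum_\theta\bigl(\frac{L'_\theta}{L_\theta}-\frac{L'_{\chi_\theta}}{L_{\chi_\theta}}\bigr)(2)\le0$, and since $x\ge1$ makes the weight $\frac{\log x}{x^a}$ nonnegative, this contribution is discarded.

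The substantive step is the $\beta_{\chi_\theta}-\beta_\theta$ contribution. Since the conductors of both $\theta^*$ and $\chi_\theta$ divide $\mathfrak m$, one has $0\le\beta_\theta,\beta_{\chi_\theta}\le\lvert\mathfrak m_\infty\rvert$, hence $\lvert\beta_{\chi_\theta}-\beta_\theta\rvert\le\lvert\mathfrak m_\infty\rvert$ and $\sum_\theta\lvert\beta_{\chi_\theta}-\beta_\theta\rvert\le[G:H]\lvert\mathfrak m_\infty\rvert$. Collecting all coefficients of $\beta_{\chi_\theta}-\beta_\theta$ — from $I_0$ the elementary $\frac1{a^2}-\frac1{x(a-1)^2}$; from the value formula the digamma piece $-\frac12\bigl(\psi(-\tfrac a2)-\psi(\tfrac{3-a}2)-\psi(1)+\psi(\tfrac32)\bigr)-\frac1{1-a}$ weighted by $\frac{\log x}{x^a}$; and from the derivative formula the trigamma piece $-\frac14\bigl(\psi'(-\tfrac a2)-\psi'(\tfrac{3-a}2)\bigr)+\frac1{(1-a)^2}$ weighted by $\frac1{x^a}$ — one reduces the $\psi$- and $\psi'$-arguments by the recurrences $\psi(z)=\psi(z+1)-1/z$, $\psi'(z)=\psi'(z+1)+1/z^2$ (and, where needed, the duplication formulas), exactly as in the proof of~\cite[Lem.~5.2]{Ba90}; then bounding what remains using $\lvert\beta_{\chi_\theta}-\beta_\theta\rvert\le\lvert\mathfrak m_\infty\rvert$, the monotonicity of $\psi$ and $\psi'$, and $x\ge1$, delivers $\frac{[G:H]\lvert\mathfrak m_\infty\rvert}{a^2}$ together with $\frac{[G:H]\lvert\mathfrak m_\infty\rvert}{x}\bigl(\frac1{(1-a)^2}-\frac{\log x}{(a-1)x^{a-1}}-\frac1{(a-1)^2x^{a-1}}\bigr)$. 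I expect this last bookkeeping to be the main obstacle: the digamma and trigamma coefficients change sign over $0<a<1$, so one must track which sign of $\beta_{\chi_\theta}-\beta_\theta$ is relevant and combine the $\psi$-, $\psi'$- and rational contributions carefully, so that precisely the stated elementary expression — with its explicit constants — survives.
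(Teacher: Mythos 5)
Your outline follows the paper's proof: substitute the two identities of Lemma~\ref{lemma:diffLsAndDerivative} at $s=-a$ into $I_0$, sum over $\theta$, bound the zero sums by $\frac{(2+a)\log x+1}{x^a}\,\mathscr R(a,\chi)$ (your estimate $\bigl\lvert\frac{1}{-a-\rho}-\frac{1}{2-\rho}\bigr\rvert\le\frac{2+a}{\lvert a+\rho\rvert^2}$ under ERH is exactly the right one), collect the $\delta(\theta)$-contributions from the unique principal $\theta$, and eliminate the constant $\bigl(\frac{L'_\theta}{L_\theta}-\frac{L'_{\chi_\theta}}{L_{\chi_\theta}}\bigr)(2)$ by rerunning the sign argument of Lemma~\ref{lem:negativeAtS} at $s=2$; the paper drops that constant silently, and your justification via $\real\sum_\theta(\theta^*(\mathfrak a)-\chi_\theta(\mathfrak a))\ge 0$ is correct and welcome.

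The step you leave open, however, is the one the lemma turns on, and the way you propose to close it would not give the stated bound. For the $\beta_{\chi_\theta}-\beta_\theta$ block, bounding ``what remains'' by $\lvert\beta_{\chi_\theta}-\beta_\theta\rvert\le\lvert\mathfrak m_\infty\rvert$ works only for the rational pieces (the $\frac1{a^2}$ term and the bracket controlled by \cite[Lem.~5.3]{Ba90}, which is how the two $[G:H]\lvert\mathfrak m_\infty\rvert$ terms of the statement arise); applied to the digamma/trigamma pieces it leaves the extra positive quantity $\frac{[G:H]\lvert\mathfrak m_\infty\rvert}{x^a}\bigl(\frac{\log x}{2}\Psi(a)+\frac14\Psi'(a)\bigr)$, where $\Psi(a)=\psi(-\frac a2)-\psi(\frac{3-a}2)-\psi(1)+\psi(\frac32)$ and $\Psi'(a)=\psi'(-\frac a2)-\psi'(\frac{3-a}2)$, and these are not in the claimed inequality and are not negligible in the regime $a\to 1$ used later. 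Moreover, your stated worry is aimed at the wrong object: $\Psi$ and $\Psi'$ do not change sign on $0<a<1$ — the recurrences $\psi(z)=\psi(z+1)-1/z$, $\psi'(z)=\psi'(z+1)+1/z^2$ and monotonicity give $\Psi(a)>0$ and $\Psi'(a)\ge 4/a^2$ — and this is precisely what the paper uses: it \emph{discards} these two terms because they enter with the coefficients $-\frac12(\beta_{\chi_\theta}-\beta_\theta)\frac{\log x}{x^a}$ and $-\frac14(\beta_{\chi_\theta}-\beta_\theta)\frac1{x^a}$, rather than bounding them in absolute value. The genuinely delicate sign is that of $\beta_{\chi_\theta}-\beta_\theta$, which can indeed be negative for individual $\theta$; but since the coefficient is independent of $\theta$ it suffices to know $\sum_\theta(\beta_{\chi_\theta}-\beta_\theta)\ge 0$, which holds because, for each real place $v\in\mathfrak m_\infty$, the number of characters among the $\chi\theta^*$ whose conductor contains $v$ is at least the corresponding number among the $\theta^*$ (compare the restriction of the subgroup $\{\theta^*\}$ and of its coset $\chi\{\theta^*\}$ to the image of the sign group at $v$). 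Without an argument of this kind the $\psi$- and $\psi'$-terms cannot be removed, and the explicit constants of the statement do not come out.
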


\begin{proof}

For any $0 < a < 1$, Lemma~\ref{lemma:diffLsAndDerivative} implies that
\begin{alignat*}{1}
\sum_{\theta \in \widehat{G/H}} \left(\frac{L'_{\theta}}{L_{\theta}} - \frac{L'_{\chi_\theta}}{L_{\chi_\theta}}\right)(-a) \leq &\ (2+a) \cdot \mathscr R(a,\chi) +\frac{3}{2} + \frac{1}{a} + \frac{1}{a+1} - \sum_{\theta \in \widehat{G/H}}\frac{\beta_{\chi_\theta}-\beta_\theta}{1-a},
\end{alignat*}
and
\begin{alignat*}{1}
\sum_{\theta \in \widehat{G/H}} \left(\frac{L'_{\theta}}{L_{\theta}} - \frac{L'_{\chi_\theta}}{L_{\chi_\theta}}\right)'(-a)  \leq &\ \mathscr R(a,\chi) +\frac{1}{a^2} + \frac{1}{(a+1)^2} + \sum_{\theta \in \widehat{G/H}}\frac{\beta_{\chi_\theta} - \beta_\theta}{(1-a)^2}.
\end{alignat*}
We used the facts that
$\psi\left(\frac{-a}{2}\right) - \psi\left(\frac{3-a}{2}\right) - \psi(1) + \psi\left(\frac{3}{2}\right) \geq 0,$
and
$\psi'\left(\frac{-a}{2}\right) - \psi'\left(\frac{3-a}{2}\right) \geq 0,$
which are easily derived from the recurrence relations $\psi(z) = \psi(z+1) - 1/z$ and $\psi'(z) = \psi'(z+1) + 1/z^2$, and the monotonicity of $\psi$ and $\psi'$.
From~\cite[Lem. 5.3]{Ba90}, for any $0 < a < 1$, we have
$
\left(\frac{\log x}{(a-1)x^{a-1}} + \frac{1}{(a-1)^2x^{a-1}} -  \frac{1}{(1-a)^2} \right) \leq 0,
$
therefore
\begin{alignat*}{1}
& \sum_{\theta \in \widehat{G/H}}\frac{\beta_{\chi_\theta} - \beta_\theta}{x}\left(\frac{\log x}{(a-1)x^{a-1}} + \frac{1}{(a-1)^2x^{a-1}} -  \frac{1}{(1-a)^2} \right)
\\  & \leq \frac{[G:H]|\mathfrak m_\infty|}{x}\left(\frac{1}{(1-a)^2} - \frac{\log x}{(a-1)x^{a-1}} - \frac{1}{(a-1)^2x^{a-1}} \right).
\end{alignat*}
The result follows by applying these estimates to $I_0(x,\theta)$ (as defined in Lemma~\ref{lemmaSumResidues}).
\end{proof}

\begin{lem}\label{lemmaBach57}
For any $0 < a < 1$,
\[\mathscr S_\mathfrak m(x) \leq \frac{2\log x}{ea}\omega(\mathfrak m_0) \leq \frac{2\log x}{ea\log 2}\log(N(\mathfrak m_0)),\]
where $\omega(\mathfrak m_0)$ is the number of distinct prime ideals dividing $\mathfrak m_0$.
\end{lem}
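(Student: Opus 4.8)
The plan is to estimate $\mathscr S_\mathfrak m(x)$ crudely, discarding all cancellation among the characters and reducing to a one-variable calculus estimate; this is the analogue of \cite[Lem.~5.7]{Ba90}.

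First I would record two elementary facts: (i) for $0<a<1$ and $N(\mathfrak a)<x$ one has $P(\mathfrak a,x)\ge 0$, being a product of the nonnegative quantities $\Lambda(\mathfrak a)$, $(N(\mathfrak a)/x)^{a}$ and $\log(x/N(\mathfrak a))$; (ii) for every ideal $\mathfrak a$ and every $\theta\in\widehat{G/H}$ one has $|\theta^{*}(\mathfrak a)-\chi_\theta(\mathfrak a)|\le 2$, since ray class characters are bounded by $1$ in absolute value. Since $\#\widehat{G/H}=[G:H]$, applying the triangle inequality inside the definition of $\mathscr S_\mathfrak m(x)$ and using $P\ge 0$ to cancel the factor $1/[G:H]$ against $\#\widehat{G/H}$ gives
\[ \mathscr S_\mathfrak m(x)\ \le\ |\mathscr S_\mathfrak m(x)|\ \le\ 2\sum_{\substack{N(\mathfrak a)<x\\ (\mathfrak a,\mathfrak m)\neq 1}} P(\mathfrak a,x). \]

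Next, $\Lambda(\mathfrak a)=0$ unless $\mathfrak a$ is a power $\mathfrak p^{k}$ of a prime ideal, and the condition $(\mathfrak a,\mathfrak m)\neq 1$ then forces $\mathfrak p\mid\mathfrak m_0$; so I would group the remaining sum by the primes $\mathfrak p$ dividing $\mathfrak m_0$. For a fixed such $\mathfrak p$, put $p=N(\mathfrak p)\ge 2$; each surviving term equals $(\log p)\,t^{-a}\log t$ with $t=x/p^{k}>1$, and a one-line computation (the maximum of $t\mapsto t^{-a}\log t$ on $t\ge 1$ is attained at $t=e^{1/a}$, with value $1/(ea)$) shows $t^{-a}\log t\le \frac{1}{ea}$. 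Since there are at most $\log x/\log p$ integers $k\ge 1$ with $p^{k}<x$, the total contribution of $\mathfrak p$ is at most $(\log p)\cdot\frac{1}{ea}\cdot\frac{\log x}{\log p}=\frac{\log x}{ea}$. Summing over the $\omega(\mathfrak m_0)$ primes dividing $\mathfrak m_0$ and combining with the previous display yields the first inequality $\mathscr S_\mathfrak m(x)\le \frac{2\log x}{ea}\,\omega(\mathfrak m_0)$.

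For the second inequality it suffices to note $N(\mathfrak m_0)\ge\prod_{\mathfrak p\mid\mathfrak m_0}N(\mathfrak p)\ge 2^{\omega(\mathfrak m_0)}$, hence $\omega(\mathfrak m_0)\le \log N(\mathfrak m_0)/\log 2$. I do not expect any genuine obstacle here: the only points requiring a little care are the justification that no cancellation among the $\theta$ is needed (the triangle-inequality step, where the normalizing factor $1/[G:H]$ must be tracked against $\#\widehat{G/H}$) and pinning down the constant $1/(ea)$ from the maximization of $t\mapsto t^{-a}\log t$.
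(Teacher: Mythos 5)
Your proposal is correct and follows essentially the same route as the paper: the paper likewise bounds the inner character sum by $2$ termwise (using $|\theta^*(\mathfrak a)-\chi_\theta(\mathfrak a)|\le 2$ and $P(\mathfrak a,x)\ge 0$) to cancel the factor $1/[G:H]$, reducing to $2\sum_{(\mathfrak a,\mathfrak m)\neq 1,\,N(\mathfrak a)<x}P(\mathfrak a,x)$, and then simply cites Bach's Lemma 5.7 for the estimate $\le \frac{\log x}{ea}\,\omega(\mathfrak m_0)\le\frac{\log x}{ea\log 2}\log N(\mathfrak m_0)$. The only difference is that you reprove that cited estimate directly (maximizing $t^{-a}\log t$ and counting prime powers of the primes dividing $\mathfrak m_0$), which is exactly the content of Bach's lemma, so your argument is a self-contained version of the same proof.
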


\begin{proof}
We have
\begin{alignat*}{1}
\mathscr S_\mathfrak m(x) & = \frac{1}{[G:H]}\sum_{\substack{N(\mathfrak a) < x \\ (\mathfrak a,\mathfrak m) \neq 1}} \left({\sum_ {\theta \in \widehat{G/H}} \left(\theta^*(\mathfrak a) - \chi_\theta(\mathfrak a)\right)}\right) P(\mathfrak a,x) \leq \sum_{\substack{N(\mathfrak a) < x \\ (\mathfrak a,\mathfrak m) \neq 1}} 2P(\mathfrak a,x),
\end{alignat*}
and the result follows from \cite[Lem. 5.7]{Ba90}.
\end{proof}

\begin{lem}[ERH]\label{lemmaMainLemma}
For any $0 < a < 1$, the fraction ${\sqrt{x}}/{(a+1)^2}$ is at most
\begin{alignat*}{1}
 [G:H]\bigg(s_1(x)\log(\Delta N(\mathfrak m_0)) + s_5(x)n +  s_4(x)|\mathfrak m_\infty| + s_3(x)\omega(\mathfrak m_0) + \frac{\mathscr S_H(x)}{\sqrt{x}}\bigg) + s_2(x),
\end{alignat*}
where
\begin{alignat*}{1}
s_1(x) =&\ \frac{2}{2a+1}\left(1 + \frac{(2+a)\log x +1}{x^{a + 1/2}}\right),\\
s_2(x) =&\ s_1(x)\left(\frac{1}{a} + \frac{1}{a+1}\right) + \frac{\log x}{x^{a + 1/2}}\left(\frac{3}{2} + \frac{1}{a} + \frac{1}{a+1}\right)
+\frac{1}{x^{a + 1/2}}\left(\frac{1}{a^2} + \frac{1}{(a+1)^2} \right),\\
s_3(x) =&\ \frac{2\log x}{e a \sqrt{x}},\\
s_4(x) =
&\ \frac{1}{(a-2)^2x^{5/2}} -\frac{s_1(x)}{2}\left(\psi\left(\frac{a+1}{2}\right) - \psi\left(\frac{a+2}{2}\right)\right) + \frac{1}{a^2\sqrt{x}}\\
&\ +\frac{1}{x^{3/2}}\left(\frac{1}{(1-a)^2} - \frac{\log x}{(a-1)x^{a-1}} - \frac{1}{(a-1)^2x^{a-1}} \right),\\
s_5(x) =&\ s_1(x)(\psi(a+1) - \log(2\pi)).
\end{alignat*}
\end{lem}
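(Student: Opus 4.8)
The plan is to feed the exact identity of Lemma~\ref{lemmaSumResidues},
\[\frac{x}{(a+1)^2} = [G:H]\bigl(\mathscr S_H(x) + \mathscr S_\mathfrak m(x)\bigr) + \sum_{\theta \in \widehat{G/H}}\bigl(I_{1/2}(x,\theta) + I_0(x,\theta) + I_-(x,\theta)\bigr),\]
into the estimates already assembled. Dividing by $\sqrt{x}$ (with $x \ge 1$, as needed to invoke Lemmas~\ref{lemmaBound12} and~\ref{lemma_boundI0}), it suffices to bound each term on the right from above: $\sum_\theta I_{1/2}/\sqrt x$ by Lemma~\ref{lemmaBound12}, $\sum_\theta I_0/\sqrt x$ by Lemma~\ref{lemma_boundI0}, $[G:H]\mathscr S_\mathfrak m/\sqrt x$ by Lemma~\ref{lemmaBach57}, and $\sum_\theta I_-/\sqrt x$ by combining the elementary estimate $0 \le \sum_{k\ge 2}(-1)^k/\bigl((a-k)^2 x^k\bigr) \le 1/\bigl((2-a)^2 x^2\bigr)$ from~\cite[Lem. 5.1]{Ba90} with $\sum_\theta(\beta_{\chi_\theta}-\beta_\theta) \le [G:H]\,|\mathfrak m_\infty|$ (discard the $\theta$ with $\beta_{\chi_\theta}<\beta_\theta$, and bound each $\beta_{\chi_\theta}$ by $|\mathfrak m_\infty|$ since the conductor $\mathfrak f_{\chi_\theta}$ divides $\mathfrak m$). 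Here and below we use $|\widehat{G/H}| = [G:H]$.

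The crucial step is the $\mathscr R(a,\chi)$ bookkeeping. After division by $\sqrt x$, the coefficient of $\mathscr R(a,\chi)$ gathered from Lemmas~\ref{lemmaBound12} and~\ref{lemma_boundI0} is $1 + \bigl((2+a)\log x + 1\bigr)x^{-a-1/2} = \tfrac{2a+1}{2}\,s_1(x)$. Substituting the ERH bound of Lemma~\ref{lemmaBoundR} for $\mathscr R(a,\chi)$, the prefactor $\tfrac{2}{2a+1}$ there cancels the $\tfrac{2a+1}{2}$, and the substitution produces precisely $[G:H]\,s_1(x)\log(\Delta N(\mathfrak m_0))$, the term $[G:H]\,s_5(x)\,n$ (since $s_5 = s_1\cdot(\psi(a+1)-\log 2\pi)$), the piece $-\tfrac{s_1(x)}{2}\bigl(\psi(\tfrac{a+1}{2})-\psi(\tfrac{a+2}{2})\bigr)$ of $s_4(x)$, and the piece $s_1(x)\bigl(\tfrac1a+\tfrac1{a+1}\bigr)$ of $s_2(x)$.

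What remains is to identify the leftover error terms with the rest of $s_2$, $s_3$, $s_4$. The bound on $[G:H]\mathscr S_\mathfrak m/\sqrt x$ is exactly $[G:H]\,s_3(x)\,\omega(\mathfrak m_0)$. The $|\mathfrak m_\infty|$-terms $\tfrac{[G:H]|\mathfrak m_\infty|}{a^2\sqrt x}$ and $\tfrac{[G:H]|\mathfrak m_\infty|}{x^{3/2}}\bigl(\tfrac{1}{(1-a)^2} - \tfrac{\log x}{(a-1)x^{a-1}} - \tfrac{1}{(a-1)^2 x^{a-1}}\bigr)$ coming from $I_0$, together with $\tfrac{[G:H]|\mathfrak m_\infty|}{(a-2)^2 x^{5/2}}$ from $I_-$, complete $[G:H]\,s_4(x)\,|\mathfrak m_\infty|$. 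The terms $\tfrac{\log x}{x^{a+1/2}}\bigl(\tfrac32+\tfrac1a+\tfrac1{a+1}\bigr)$ and $\tfrac{1}{x^{a+1/2}}\bigl(\tfrac1{a^2}+\tfrac1{(a+1)^2}\bigr)$ from $I_0$ complete $s_2(x)$, and $[G:H]\mathscr S_H(x)/\sqrt x$ is carried through unchanged. The lone negative term $-\tfrac1{a^2\sqrt x}$ from $I_0$ is simply dropped, preserving the upper bound. Collecting everything yields the asserted inequality.

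The difficulty here is organizational, not conceptual: one must shepherd roughly a dozen error terms through the division by $\sqrt x$ and the substitution of Lemma~\ref{lemmaBoundR}. The one point that genuinely has to be verified is that the coefficient of $\mathscr R(a,\chi)$ comes out \emph{exactly} as $\tfrac{2a+1}{2}\,s_1(x)$, so that the factor $\tfrac{2}{2a+1}$ in Lemma~\ref{lemmaBoundR} cancels cleanly --- this forced cancellation is precisely what dictates the definitions of $s_1,\dots,s_5$. The only mildly delicate estimate is the one for $\sum_\theta I_-$, which requires both the nonnegativity and first-term domination of the alternating tail and the replacement of $\sum_\theta(\beta_{\chi_\theta}-\beta_\theta)$ by $[G:H]|\mathfrak m_\infty|$.
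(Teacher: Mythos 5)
Your proposal is correct and follows exactly the paper's route: start from the identity of Lemma~\ref{lemmaSumResidues}, bound $\sum_\theta I_-$ by $[G:H]|\mathfrak m_\infty|/((a-2)^2x^2)$ via the alternating-tail estimate, invoke Lemmas~\ref{lemmaBound12}, \ref{lemmaBoundR}, \ref{lemma_boundI0} and~\ref{lemmaBach57}, and divide by $\sqrt{x}$; your bookkeeping (in particular the identification of the $\mathscr R(a,\chi)$-coefficient with $\tfrac{2a+1}{2}s_1(x)$ and the dropping of the negative $-1/(a^2\sqrt x)$ term) is exactly what the paper leaves implicit in its one-line conclusion.
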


\begin{proof}
As in~\cite[Lem. 5.1]{Ba90}, we have $0 \leq \sum_{k=2}^\infty \frac{(-1)^k}{(a-k)^2x^k} \leq \frac{1}{(a-2)^2x^2}$.
We deduce that $I_-(x,\theta) \leq \frac{|\beta_{\chi_\theta} - \beta_\theta|}{(a-2)^2x^2} \leq \frac{|\mathfrak m_\infty|}{(a-2)^2x^2}$.
Together with Lemma~\ref{lemmaBound12}, the bound from Lemma~\ref{lemmaSumResidues} becomes
\begin{alignat*}{1}
\frac{\sqrt{x}}{(a+1)^2} &\leq \frac{[G:H]|\mathfrak m_\infty|}{(a-2)^2x^{5/2}} + \mathscr R(a,\chi) + \frac{1}{\sqrt{x}}\sum_{\theta \in \widehat{G/H}} I_0(x,\theta) + [G:H]   \frac{\mathscr S_H(x) + \mathscr S_\mathfrak m(x)}{\sqrt{x}}.
\end{alignat*}
The result then follows from Lemma~\ref{lemmaBoundR}, Lemma~\ref{lemma_boundI0} and Lemma~\ref{lemmaBach57}.
\end{proof}

\begin{lem}\label{lemma:BoundSH}
Suppose that $\chi(\mathfrak p) = 1$ for all prime ideals $\mathfrak p$ such that $N(\mathfrak p) < x$, ${[\mathfrak p]_\mathfrak m  \in H}$, and $\deg(\mathfrak p) = 1$. Then, 
for any $0 < a < 1$,
$$\mathscr S_H(x) \leq \frac{2n}{ea}\sum_{\substack{m < \sqrt x}} \Lambda(m).$$
\end{lem}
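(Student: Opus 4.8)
The plan is to follow the argument of~\cite[Lem.~5.8]{Ba90}, isolating the terms of $\mathscr S_H(x)$ that survive the hypothesis and bounding them by the supremum of an elementary function times a Chebyshev-type sum. First I would reduce to prime powers: by the sign convention of the paper, for any ideal $\mathfrak a$ coprime to $\mathfrak m_0$ the real part of $1-\chi(\mathfrak a)$ lies in $[0,2]$, while $P(\mathfrak a,x)\ge 0$ for $N(\mathfrak a)<x$ and $P(\mathfrak a,x)=0$ unless $\mathfrak a=\mathfrak p^k$ is a prime power. Let $\mathscr T$ denote the set of pairs $(\mathfrak p,k)$, $k\ge1$, with $N(\mathfrak p^k)<x$, $[\mathfrak p^k]_\mathfrak m\in H$ and $\chi(\mathfrak p^k)\ne 1$. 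Every $(\mathfrak p,k)\in\mathscr T$ satisfies $(\deg\mathfrak p)\,k\ge 2$: if instead $\deg\mathfrak p=k=1$, then $\mathfrak p$ is a degree-$1$ prime with $N(\mathfrak p)<x$ and $[\mathfrak p]_\mathfrak m\in H$, so the hypothesis forces $\chi(\mathfrak p)=1$, contradicting $\chi(\mathfrak p^k)\ne1$. Since all the terms of $\mathscr S_H(x)$ not indexed by $\mathscr T$ vanish, one gets $\mathscr S_H(x)=\sum_{(\mathfrak p,k)\in\mathscr T}(1-\chi(\mathfrak p^k))P(\mathfrak p^k,x)\le 2\sum_{(\mathfrak p,k)\in\mathscr T}P(\mathfrak p^k,x)$.

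Next I would write $P(\mathfrak p^k,x)=\Lambda(\mathfrak p^k)\,g(N(\mathfrak p^k))$ with $g(t)=(t/x)^a\log(x/t)$; an elementary calculation gives $\sup_{0<t<x}g(t)=g(xe^{-1/a})=1/(ea)$, hence $\mathscr S_H(x)\le\frac{2}{ea}\sum_{(\mathfrak p,k)\in\mathscr T}\Lambda(\mathfrak p^k)=\frac{2}{ea}\sum_{(\mathfrak p,k)\in\mathscr T}\log N(\mathfrak p)$ (using $\Lambda(\mathfrak p^k)=\log N(\mathfrak p)$). It then remains to prove the counting estimate $\sum_{(\mathfrak p,k)\in\mathscr T}\log N(\mathfrak p)\le n\sum_{m<\sqrt x}\Lambda(m)$.

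I would establish this by grouping according to the rational prime $p$ lying below $\mathfrak p$. For $(\mathfrak p,k)\in\mathscr T$ one has $p^{(\deg\mathfrak p)k}=N(\mathfrak p^k)<x$ with $(\deg\mathfrak p)k\ge2$, so $p<\sqrt x$, and $\log N(\mathfrak p)=(\deg\mathfrak p)\log p$; thus it suffices to prove, for each $p$, that $\sum_{\mathfrak p\mid p}(\deg\mathfrak p)\cdot\#\{k:(\mathfrak p,k)\in\mathscr T\}\le n\cdot\#\{j\ge1:p^j<\sqrt x\}$. The key point is that each individual prime $\mathfrak p\mid p$ satisfies $\#\{k:(\mathfrak p,k)\in\mathscr T\}\le\#\{j\ge1:p^j<\sqrt x\}$. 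If $\deg\mathfrak p\ge2$, any $(\mathfrak p,k)\in\mathscr T$ has $p^{2k}\le p^{(\deg\mathfrak p)k}<x$, so $p^k<\sqrt x$ and $k\mapsto k$ is the required injection. If $\deg\mathfrak p=1$, a pair $(\mathfrak p,k)\in\mathscr T$ forces $[\mathfrak p]_\mathfrak m\notin H$ — otherwise $N(\mathfrak p)=p<x$ together with $\deg\mathfrak p=1$ and $[\mathfrak p]_\mathfrak m\in H$ would give $\chi(\mathfrak p)=1$ and hence $\chi(\mathfrak p^k)=\chi(\mathfrak p)^k=1$ — so, letting $d\ge2$ be the order of $[\mathfrak p]_\mathfrak m$ in $\Cl_\mathfrak m(K)/H$, the membership $[\mathfrak p^k]_\mathfrak m\in H$ means $d\mid k$, hence $k=jd$ with $p^{2j}\le p^{jd}<x$, whence $p^j<\sqrt x$ and $jd\mapsto j$ is the required injection. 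Summing over $\mathfrak p\mid p$ and using $\sum_{\mathfrak p\mid p}e_\mathfrak p\deg\mathfrak p=n$ (with $e_\mathfrak p$ the ramification index) yields the per-$p$ bound, and summing over $p$ gives $\sum_{(\mathfrak p,k)\in\mathscr T}\log N(\mathfrak p)\le n\sum_p\log p\cdot\#\{j\ge1:p^j<\sqrt x\}=n\sum_{p^j<\sqrt x}\log p=n\sum_{m<\sqrt x}\Lambda(m)$, which combined with the previous paragraph proves the lemma.

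The routine ingredients are the sign convention and the maximum of $g$; the delicate step, and the only real departure from~\cite[Lem.~5.8]{Ba90}, is the degree-$1$ case of the counting argument. In Bach's situation $H$ is the whole group, every power of a degree-$1$ prime below $x$ has trivial $\chi$-value and drops out entirely, whereas here degree-$1$ primes $\mathfrak p$ with $[\mathfrak p]_\mathfrak m\notin H$ persist through their powers; one must exploit that the order of $[\mathfrak p]_\mathfrak m$ in the quotient is at least $2$ to gain the factor $\tfrac12$ that turns $\log_p x$ into $\log_p\sqrt x$. I expect that to be the main obstacle to carry out cleanly.
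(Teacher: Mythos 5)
Your proof is correct and follows essentially the same route as the paper: bound $1-\chi$ by $2$ and $P(\mathfrak a,x)$ by $\Lambda(\mathfrak a)/(ea)$, then split primes by degree and, for degree-$1$ primes, use that the order of $[\mathfrak p]_\mathfrak m$ in $\Cl_\mathfrak m(K)/H$ is at least $2$ (the paper's ``smallest $\ell$ with $[\mathfrak p^\ell]_\mathfrak m\in H$'') to gain the $\sqrt{x}$ truncation, before summing with $\sum_{\mathfrak p\mid p}\deg\mathfrak p\le n$. No gaps.
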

\begin{proof}
We start as in~\cite[Lem. 5.7]{Ba90} by observing that when $t \geq 1$, the function $t^{-a}\log t$ is bounded above by $1/ea$. We deduce
\begin{equation}\label{eq:boundSH}
\mathscr S_H(x) = \sum_{\substack{N(\mathfrak a) < x \\ [\mathfrak a]_\mathfrak m  \in H}} \left(1 - \chi(\mathfrak a)\right)P(\mathfrak a,x) \leq \frac{2}{ea} \sum_{\substack{N(\mathfrak a) < x \\ [\mathfrak a]_\mathfrak m  \in H\\ \chi(\mathfrak a) \neq 1}} \Lambda(\mathfrak a).
\end{equation}
Fix a prime ideal $\mathfrak p$ (above a rational prime $p$) of norm smaller than $x$ and consider the contribution of its powers to the above sum.
First suppose that $\deg(\mathfrak p) > 1$. Then,
$$\sum_{\substack{N(\mathfrak p^k) < x \\ [\mathfrak p^k]_\mathfrak m  \in H\\ \chi(\mathfrak p^k) \neq 1}} \Lambda(\mathfrak p^k)
\leq \sum_{\substack{N(\mathfrak p^k) < x}} \deg(\mathfrak p)\Lambda(p^k) \leq  \deg(\mathfrak p)\sum_{\substack{ p^k < \sqrt{x}}}\Lambda(p^k).$$
Now suppose that $\deg(\mathfrak p) = 1$, and let $\ell$ be the smallest integer such that $[\mathfrak p^\ell]_\mathfrak m \in H$. 
If $\ell = 1$, then $\chi(\mathfrak p^k) = 1$ for any integer $k$, so the contribution of $\mathfrak p$ is zero. Suppose that $\ell \geq 2$.
Then,
$$\sum_{\substack{N(\mathfrak p^k) < x \\ [\mathfrak p^k]_\mathfrak m  \in H\\ \chi(\mathfrak p^k) \neq 1}} \Lambda(\mathfrak p^k)
\leq \sum_{\substack{N(\mathfrak p^{k\ell}) < x}} \Lambda(\mathfrak p^{k\ell}) \leq \deg(\mathfrak p)\sum_{\substack{p^{k} < \sqrt x}} \Lambda(p^{k}).$$
Summing over all rational primes $p$ and ideals $\mathfrak p$ above $p$, we obtain
\begin{alignat*}{1}
\sum_{p}\sum_{\mathfrak p \mid p} \sum_{\substack{N(\mathfrak p^k) < x \\ [\mathfrak p^k]_\mathfrak m  \in H\\ \chi(\mathfrak p^k) \neq 1}} \Lambda(\mathfrak p^k)
\leq \sum_{p}\sum_{\mathfrak p \mid p}\deg(\mathfrak p)\sum_{\substack{p^{k} < \sqrt x}} \Lambda(p^{k})
\leq n\sum_{\substack{m < \sqrt x}} \Lambda(m).
\end{alignat*}
We conclude by applying this inequality to Equation~\eqref{eq:boundSH}.
\end{proof}

\begin{lem}\label{lemma:limit}
For any $x>0$,
$$\lim_{a \rightarrow 1}\left(\frac{1}{(1-a)^2} - \frac{\log x}{(a-1)x^{a-1}} - \frac{1}{(a-1)^2x^{a-1}} \right) = \frac{(\log x)^2}{2}.$$
\end{lem}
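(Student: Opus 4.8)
The plan is to convert this limit into an elementary power‑series computation. First I would substitute $t = a-1$, so that $t \to 0$ as $a \to 1$, and abbreviate $L = \log x$; then $x^{a-1} = e^{tL}$, while $(1-a)^2 = (a-1)^2 = t^2$, so the bracketed quantity becomes
\[
F(t) \;=\; \frac{1}{t^2} \;-\; \frac{L}{t}\,e^{-tL} \;-\; \frac{1}{t^2}\,e^{-tL} \;=\; \frac{1-e^{-tL}}{t^2} \;-\; \frac{L\,e^{-tL}}{t}.
\]
Next I would insert the expansion $e^{-tL} = 1 - tL + \tfrac{t^2L^2}{2} + O(t^3)$, valid near $t=0$. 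From $1 - e^{-tL} = tL - \tfrac{t^2L^2}{2} + O(t^3)$ one gets $\dfrac{1-e^{-tL}}{t^2} = \dfrac{L}{t} - \dfrac{L^2}{2} + O(t)$, and likewise $\dfrac{L\,e^{-tL}}{t} = \dfrac{L}{t} - L^2 + O(t)$. Subtracting, the singular terms $L/t$ cancel and $F(t) = \tfrac{L^2}{2} + O(t)$, hence $\lim_{t\to 0} F(t) = \tfrac{L^2}{2} = \tfrac{(\log x)^2}{2}$, which is the claim.

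The only point that needs attention is precisely this cancellation of the two $\tfrac{L}{t}$ contributions, which is what makes the limit finite; once that is observed the rest is a routine bookkeeping of the first two terms of the Taylor series. (When $x=1$ one has $L=0$ and $F\equiv 0$, consistent with the stated value. An alternative to the series expansion is to rewrite $F$ over the common denominator $t^2 e^{tL}$ and apply L'Hôpital's rule twice, but the expansion is shorter and more transparent.) This identity is exactly what allows one to let $a \to 1$ in the finite-$a$ estimates of Lemma~\ref{lemma_boundI0} and Lemma~\ref{lemmaMainLemma} when carrying out the optimization behind Theorem~\ref{mainTheorem}.
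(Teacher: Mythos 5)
Your proposal is correct. The substitution $t=a-1$, $L=\log x$ turns the expression into $F(t)=\frac{1-e^{-tL}}{t^2}-\frac{L e^{-tL}}{t}$, and your second-order expansion of $e^{-tL}$ correctly exhibits the cancellation of the two $L/t$ poles, leaving $\tfrac{L^2}{2}+O(t)$; the edge case $x=1$ is handled and the expansion is legitimate since the exponential series is entire. The paper proceeds differently in mechanics though not in spirit: it sets $b=a-1$, combines the three terms over the common denominator $b^2x^b$ to get $\lim_{b\to 0}\frac{x^b-b\log x-1}{b^2x^b}$, and evaluates this $\tfrac{0}{0}$ form by l'H\^opital's rule (the route you mention only as an alternative). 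The two arguments are of comparable length; your Taylor computation makes the source of the finiteness (the cancellation of the singular $L/t$ contributions) explicit and even yields the first-order error term in $t$, while the paper's l'H\^opital computation avoids series bookkeeping at the cost of differentiating the quotient twice. Either is a complete and acceptable proof of the lemma.
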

\begin{proof}
A simple application of l'H\^opital's rule yields
\begin{alignat*}{1}
&\lim_{a \rightarrow 1}\left(\frac{1}{(1-a)^2} - \frac{\log x}{(a-1)x^{a-1}} - \frac{1}{(a-1)^2x^{a-1}} \right)\\
&= \lim_{b \rightarrow 0}\left(\frac{x^b - b\log x - 1}{b^2x^b} \right) = \lim_{b \rightarrow 0}\left(\frac{x^b\log x - \log x}{bx^b(b\log(x) + 2)} \right) \\
&= \lim_{b \rightarrow 0}\left(\frac{(\log x)^2}{b^2(\log x)^2 + 4b\log x + 2} \right)  = \frac{(\log x)^2}{2}.
\end{alignat*}
\end{proof}

\subsection{Proof of Theorem~\ref{mainTheorem}}
Let $x$ be the norm of the smallest prime ideal $\mathfrak p$ such that $[\mathfrak p]_\mathfrak m \in H$, $\deg(\mathfrak p) = 1$ and $\chi(\mathfrak p) \neq 1$. 
First suppose that $x \leq 95$, and consider the quantity
\begin{equation*}
B = \big([G:H]\left(2.71\log(\Delta N(\mathfrak m_0)) +  1.29|\mathfrak m_\infty| + 1.38\omega(\mathfrak m_0)\right) + 4.13\big)^2.
\end{equation*}
We want to show that $x \leq B$.
\subsubsection*{Suppose $n = 1$.} For the ray class group $G$ not to be trivial, one must have either $|\mathfrak m_\infty| = 1$ and $N(\mathfrak m_0) \geq 3$, in which case
\begin{equation*}
B \geq \big(2.71\log(3) +  1.29 + 1.38 + 4.13\big)^2 = 95.59\dots \geq x,
\end{equation*}
or $|\mathfrak m_\infty| = 0$ and $N(\mathfrak m_0) \geq 5$, in which case
\begin{equation*}
B \geq \big(2.71\log(5) + 1.38 + 4.13\big)^2 = 97.44\dots \geq x.
\end{equation*}

\subsubsection*{Suppose $n = 2$.}
Suppose that $\Delta N(\mathfrak m_0) \geq 8$. Then
\begin{equation*}
B \geq \left(2.71\log(8) + 4.13\right)^2 = 95.36\dots \geq x.
\end{equation*}
Now, one must investigate the cases where $\Delta N(\mathfrak m_0) \leq 7$.
All quadratic fields with a discriminant of absolute value at most 7 have a trivial (narrow) class group.
Therefore, one must have $N (\mathfrak m_0) \geq 2$. There is only one quadratic field of discriminant of absolute value at most $3$, namely $\Q(\sqrt{-3})$. It has discriminant of absolute value $3$ and no ideal of norm $2$, so the condition $\Delta N(\mathfrak m_0) \leq 7$ is impossible.

\subsubsection*{Suppose $n > 2$.}  From~\cite[Lem. 7.1]{Ba90}, we get
$$\log(\Delta N(\mathfrak f)) \geq n(\log(2\pi) - \psi(2)) - \frac{3}{2} \geq 2.74,$$
and we deduce
\begin{equation*}
B \geq \left(2.71\cdot2.74 + 4.13\right)^2 = 133.52\dots \geq x.
\end{equation*}
It remains to consider the case $x > 95$.
From Lemma~\ref{lemma:BoundSH} and~\cite[Th.~12]{RS62},
\begin{alignat*}{1}
\mathscr S_H(x) \leq \frac{2n}{ea}\sum_{\substack{m < \sqrt x}} \Lambda(m) \leq \frac{2nC\sqrt{x}}{ea},
\end{alignat*}
where $C = 1.03883$. 
We now apply Lemma~\ref{lemmaMainLemma} with $a\rightarrow 1$. From Lemma~\ref{lemma:limit} (applied to the term $s_4$), and the facts that for $x \geq 95$, $\left(s_5(x) + \frac{2C}{ea}\right)$ is negative, and $s_1,s_2,s_3$ and $s_4$ are decreasing, we get
\begin{alignat*}{1}
x &\leq 2^4\big([G:H]\left(s_1(95)\log(\Delta N(\mathfrak m_0)) +  s_4(95)|\mathfrak m_\infty| + s_3(95)\omega(\mathfrak m_0)\right) + s_2(95)\big)^2\\
&\leq \big([G:H]\left(2.71\log(\Delta N(\mathfrak m_0)) +  1.29|\mathfrak m_\infty| + 1.38\omega(\mathfrak m_0)\right) + 4.13\big)^2,
\end{alignat*}
which proves the theorem.
\qed

\section{Consequences}\label{sec:consequences}

With Theorem~\ref{mainTheorem} at hands, we can now derive a few important consequences. The first of them, Theorem~\ref{thmBoundRayClass}, asserts that a subgroup $H$ of the ray class group $\Cl_{\mathfrak m}(K)$ is always generated by ideals of bounded prime norm.

\subsection{Proof of Theorem~\ref{thmBoundRayClass}}
Recall that $K$ is a number field, $\Delta$ is the absolute value of the discriminant of $K$, and $\mathfrak m$ is a modulus of $K$, with finite part $\mathfrak m_0$ and infinite part $\mathfrak m_\infty$.
Also, $\mathfrak h$ is an ideal in $K$, and $H$ is a non-trivial subgroup of the ray class group $\Cl_\mathfrak m(K)$.
Let 
\begin{alignat*}{1}
B &= \left([G:H]\left(2.71\log(\Delta N(\mathfrak h\mathfrak m_0)) +  1.29|\mathfrak m_\infty| + 1.38\omega(\mathfrak h\mathfrak m_0)\right) + 4.13\right)^2,\\
\mathscr N &= \{\mathfrak p \in \mathscr I_\mathfrak m(K) \mid \mathfrak p \text{ is prime}, (\mathfrak p, \mathfrak h) = 1, [\mathfrak p]_\mathfrak m \in H, \deg(\mathfrak p) = 1 \text{ and } N(\mathfrak p) < B\},
\end{alignat*}
 and $N$ the subgroup of $H$ generated by $\mathscr N$. By contradiction, suppose $N \neq H$. Then, there is a non-trivial character of $H$ that is trivial on $N$. Since $G$ is abelian, this character on $H$ extends to a character on $G$, thereby defining a ray class character $\chi$ modulo $\mathfrak m$ that is not trivial on $H$.
From Theorem~\ref{mainTheorem}, 
there is a prime ideal $\mathfrak p \in \mathscr I_{\mathfrak h \mathfrak m}(K)$ such that $[\mathfrak p]_\mathfrak m \in H$, $\chi(\mathfrak p) \neq 1$, $\deg( \mathfrak p) = 1$ and
$N(\mathfrak p) \leq B.$
All these conditions imply that $\mathfrak p \in \mathscr N \subseteq N$, whence $\chi(\mathfrak p) = 1$, a contradiction.\qed\\

The next consequence, Theorem~\ref{thmBoundIntegers}, is a specialization of Theorem~\ref{thmBoundRayClass} to the field of rational numbers, and asserts that a subgroup $H$ of a group of the form $(\Z/m \Z)^\times$ is generated by prime numbers bounded polynomially in the subgroup index and $\log(m)$.

\subsection{Proof of Theorem~\ref{thmBoundIntegers}}
Recall that $m$ is a positive integer, and $H$ is a non-trivial subgroup of $G = (\Z/m\Z)^\times$.
Let $\mathfrak m = \mathfrak m_0 \mathfrak m_\infty$ where $\mathfrak m_0 = m\Z$ and $\mathfrak m_\infty$ is the real embedding of $\Q$. Then, $\Cl_\mathfrak m(\Q)$ is isomorphic to $G = (\Z/m\Z)^\times$.  An isomorphism is given by the map sending the class of $a\Z$ to $a \text{ mod } m$. The subgroup $H$ of $(\Z/m\Z)^\times$ corresponds to a subgroup $H'$ of $\Cl_\mathfrak m(\Q)$ through this isomorphism. From Theorem~\ref{thmBoundRayClass}, $H'$ is generated by prime numbers smaller than
$$B = \left([G:H]\left(2.71\log(m) +  1.29 + 1.38\omega(m)\right) + 4.13\right)^2,$$
and so is $H$. If $H$ is the full group, then the theorem follows from~\cite[Th. 3]{Ba90}; and for $m \leq 11000$, the result is easy to check by an exhaustive computation. So we can assume that $m/|H| \geq 2$ and $m > 11000$. From~\cite[Lem. 6.4]{Ba90},
\begin{alignat*}{1}
\frac{\omega(m)}{\log m} &\leq \frac{\mathrm{li}(\log m) + 0.12\sqrt{\log m}}{\log m} 
\leq  \frac{\mathrm{li}(\log 11000) + 0.12\sqrt{\log 11000}}{\log 11000} \leq 0.67,
\end{alignat*}
where $\mathrm{li}$ is the logarithmic integral function. We get
$$B \leq \left([G:H]\log(m)\left(2.71 +  \frac{1.29 + 4.13/2}{\log 11000}  + 1.38\cdot 0.67\right)\right)^2,$$
and we conclude by computing the constant.
\qed\\

The third consequence is a bound on the degrees of the cyclic isogenies required to connect all isogenous principally polarizable abelian varieties over a finite field sharing the same endomorphism ring.

\subsection{Proof of Theorem~\ref{thmBoundIsogenies}}
Recall that $\mathscr A$ is a principally polarized, absolutely simple, ordinary abelian variety over a finite field $\F_q$, with endomorphism algebra $K$ and endomorphism ring isomorphic to an order $\cO$ in $K$. The field $K_0$ is the maximal real subfield of $K$, and $\mathfrak f$ is the conductor of $\cO$. For any $B > 0$, $\mathscr G(B)$ is the isogeny graph whose vertices are the principally polarizable varieties isogenous to $\mathscr A$ and with the same endomorphism ring, and whose edges are isogenies connecting them, of prime degree (therefore cyclic) smaller than $B$.
By the theory of complex multiplication, the graph $\mathscr G(B)$ is isomorphic to the Cayley graph of $$\mathscr P(\cO) = \ker(\Cl(\cO) \rightarrow \Cl^+(\cO\cap K_0))$$ with set of generators the classes of  ideals of prime norm smaller than $B$ (see~\cite[Sec. 2.5]{JW16} for a detailed discussion on this isomorphism).
Let $g \geq 2$ be the dimension of $\mathscr A$, and $n = 2g$ the degree of its endomorphism algebra $K$.
The natural map $\pi: \Cl_\mathfrak f(K) \rightarrow \Cl(\cO)$ is a surjection (see for instance~\cite[Sec. 2.2]{JW16}), so it is sufficient to find a generating set for $H = \pi^{-1}(\mathscr P(\cO))$.
From~\cite[Lem. 2.1]{JW16}, we have the inequality $$[\Cl_\mathfrak f(K):H] \leq [\Cl(\cO):\mathscr P(\cO)] \leq h_{\cO_0}^+.$$
From Theorem~\ref{thmBoundRayClass}, $\mathscr G(B)$ is connected for
\begin{equation}\label{eq:boundProofIsogeny}
B = \left(2.71 + 1.38\frac{\omega(\mathfrak f)}{\log(\Delta N(\mathfrak f))} + \frac{4.13}{\log(\Delta N(\mathfrak f))}\right)^2\left(h_{\cO_0}^+\log(\Delta N(\mathfrak f))\right)^2,
\end{equation}
and it remains to show that the constant factor in this expression is at most $26$.
First, we need a lower bound on the quantity $\log(\Delta N(\mathfrak f))$. From \cite[Tab. 3]{Odlyzko90}, if $n = 4$, $\log(\Delta N(\mathfrak f)) \geq 4\log(3.263) \geq 4.73$ (this result assumes ERH). For $n \geq 6$,~\cite[Lem. 7.1]{Ba90} implies
$$\log(\Delta N(\mathfrak f)) \geq n(\log(2\pi) - \psi(2)) - \frac{3}{2} \geq 6.99.$$
Therefore for any degree $n \geq 4$, we have $\log(\Delta N(\mathfrak f)) \geq 4.73$.
Now, for $n = 2$, smaller values of $\log(\Delta N(\mathfrak f))$ are possible. One can easily check that the constant factor in the expression~\eqref{eq:boundProofIsogeny} is at most $26$ for all pairs $(\Delta, N(\mathfrak f))$ such that $\log(\Delta N(\mathfrak f)) < 4.73$ by an exhaustive computation. There are however five exceptions: when the field is $\Q(\sqrt{-1})$, and $N(\mathfrak f) \in \{1,2\}$, when the field is $\Q(\sqrt{-3})$, and $N(\mathfrak f) \in \{1,3\}$, and when the field is $\Q(\sqrt{5})$, and $N(\mathfrak f) = 1$. 
Since $\mathfrak f$ is the conductor of an order in a quadratic field, it is generated by an integer, so $N(\mathfrak f)$ must be a square. This discards the cases $N(\mathfrak f) \in \{2,3\}$.
When $N(\mathfrak f) = 1$, the order $\mathcal O$ is the ring of integers, which has a trivial (narrow) class group for $\Q(\sqrt{-1})$, $\Q(\sqrt{-3})$ and $\Q(\sqrt{5})$.

Then, irrespective of the value of $n$, we can assume in the rest of the proof that $\log(\Delta N(\mathfrak f)) \geq 4.73$.
If $\omega(\mathfrak f) \leq 5$, then $$\frac{\omega(\mathfrak f)}{\log(\Delta N(\mathfrak f))} \leq \frac{5}{4.73} \leq 1.06.$$
 If $\omega(\mathfrak f) > 5$, then $N(\mathfrak f) \geq 2\cdot3\cdot5\cdot7\cdot11\cdot13^{\omega(\mathfrak f)-5}$, and
$$\frac{\omega(\mathfrak f)}{\log(\Delta N(\mathfrak f))} \leq \frac{\omega(\mathfrak f)}{\log(2\cdot3\cdot5\cdot7\cdot11\cdot13^{\omega(\mathfrak f)-5})} \leq \frac{5}{\log(2\cdot3\cdot5\cdot7\cdot11)} + \frac{1}{\log(13)} \leq 1.06.$$
Then, $$\left(2.71 + \frac{1.38 \cdot \omega(\mathfrak f)}{\log(\Delta N(\mathfrak f))} + \frac{4.13}{\log(\Delta N(\mathfrak f))}\right)^2 \leq (2.71 + 1.38\cdot1.06 + 4.13/4.73)^2 \leq 26,$$
which concludes the proof.
\qed

\section*{Acknowledgements}
The author wishes to thank Arjen K. Lenstra and Rob Granger, as well as the anonymous referees, for their helpful feedback.
Part of this work was supported by the Swiss National Science Foundation under grant number 200021-156420.

\bibliographystyle{amsplain}
\bibliography{biblio-math,biblio-crypto}

\def\cprime{$'$}
\providecommand{\bysame}{\leavevmode\hbox to3em{\hrulefill}\thinspace}
\providecommand{\MR}{\relax\ifhmode\unskip\space\fi MR }
\providecommand{\MRhref}[2]{%
  \href{http://www.ams.org/mathscinet-getitem?mr=#1}{#2}
}
\providecommand{\href}[2]{#2}
\begin{thebibliography}{10}

\bibitem{Ba90}
E.~Bach, \emph{Explicit bounds for primality testing and related problems},
  Mathematics of Computation \textbf{55} (1990), no.~191, 355--380.
  \MR{91m:11096}

\bibitem{BS96}
Eric Bach and Jonathan~P. Sorenson, \emph{Explicit bounds for primes in residue
  classes}, Math. Comput. \textbf{65} (1996), 1717--1735.

\bibitem{BS16}
J.-F. Biasse and F.~Song, \emph{Efficient quantum algorithms for computing
  class groups and solving the principal ideal problem in arbitrary degree
  number fields}, Proceedings of the Twenty-Seventh Annual ACM-SIAM Symposium
  on Discrete Algorithms, SIAM, 2016, pp.~893--902.

\bibitem{CGS14}
P.~Campbell, M.~Groves, and D.~Shepherd, \emph{Soliloquy: A cautionary tale},
  ETSI 2nd Quantum-Safe Crypto Workshop, 2014, Available at
  \url{http://docbox.etsi.org/Workshop/2014/201410_CRYPTO/S07_Systems_and_Attacks/S07_Groves_Annex.pdf}.

\bibitem{CDPR16}
R.~Cramer, L.~Ducas, C.~Peikert, and O.~Regev, \emph{Recovering short
  generators of principal ideals in cyclotomic rings}, pp.~559--585, Springer
  Berlin Heidelberg, Berlin, Heidelberg, 2016.

\bibitem{CDW16}
Ronald Cramer, L{\'e}o Ducas, and Benjamin Wesolowski, \emph{Short
  {S}tickelberger class relations and application to {I}deal-{SVP}}, Advances
  in Cryptology -- EUROCRYPT 2017 (Jean-S{\'e}bastien Coron and Jesper~Buus
  Nielsen, eds.), Springer International Publishing, 2017, pp.~324--348.

\bibitem{GHS02}
S.~D. Galbraith, F.~Hess, and N.~P. Smart, \emph{Extending the {G}{H}{S} {W}eil
  descent attack}, Proceedings of the International Conference on the Theory
  and Applications of Cryptographic Techniques: Advances in Cryptology (London,
  UK), EUROCRYPT '02, Springer-Verlag, 2002, pp.~29--44.

\bibitem{GGH13}
S.~Garg, C.~Gentry, and S.~Halevi, \emph{Candidate multilinear maps from ideal
  lattices}, EUROCRYPT, 2013, pp.~1--17.

\bibitem{JMV09}
D.~Jao, S.~D. Miller, and R.~Venkatesan, \emph{Expander graphs based on
  {G}{R}{H} with an application to elliptic curve cryptography}, J. Number
  Theory \textbf{129} (2009), no.~6, 1491 -- 1504.

\bibitem{JW16}
D.~Jetchev and B.~Wesolowski, \emph{Horizontal isogeny graphs of ordinary
  abelian varieties and the discrete logarithm problem}, Cryptology ePrint
  Archive, Report 2017/053, 2017, \url{http://eprint.iacr.org/2017/053}.

\bibitem{LMO79}
J.C. Lagarias, H.L. Montgomery, and A.M. Odlyzko, \emph{A bound for the least
  prime ideal in the {C}hebotarev density theorem}, Inventiones mathematicae
  \textbf{54} (1979), 271--296 (eng).

\bibitem{LO77}
J.C. Lagarias and A.M. Odlyzko, \emph{Effective versions of the {C}hebotarev
  density theorem}, Algebraic number fields: $L$-functions and Galois
  properties (Proc. Sympos., Univ. Durham, Durham, 1975), Academic Press,
  London, 1977, pp.~409--464.

\bibitem{LLS15}
Y.~Lamzouri, X.~Li, and K.~Soundararajan, \emph{Conditional bounds for the
  least quadratic non-residue and related problems}, Mathematics of Computation
  \textbf{84} (2015), no.~295, 2391--2412.

\bibitem{LSS14}
A.~Langlois, D.~Stehl{\'e}, and R.~Steinfeld, \emph{{GGHLite}: More efficient
  multilinear maps from ideal lattices}, Advances in Cryptology--EUROCRYPT
  2014, Springer, 2014, pp.~239--256.

\bibitem{MN02}
D.~Maisner and E.~Nart, \emph{Abelian surfaces over finite fields as
  jacobians}, Experiment. Math. \textbf{11} (2002), 321–337.

\bibitem{Neukirch99}
J.~Neukirch and N.~Schappacher, \emph{Algebraic number theory}, Grundlehren der
  mathematischen Wissenschaften, Springer, Berlin, New York, Barcelona, 1999.

\bibitem{Odlyzko90}
A.~M. Odlyzko, \emph{Bounds for discriminants and related estimates for class
  numbers, regulators and zeros of zeta functions : a survey of recent
  results}, Journal de th{\'e}orie des nombres de Bordeaux \textbf{2} (1990),
  no.~1, 119--141 (eng).

\bibitem{RS62}
B.~Rosser and L.~Schoenfeld, \emph{Approximate formulas for some functions of
  prime numbers}, Illinois Journal of Mathematics \textbf{6} (1962), no.~1,
  64--94.

\bibitem{Schoof98}
R.~Schoof, \emph{Minus class groups of the fields of the {$\ell$}-th roots of
  unity}, Mathematics of Computation of the American Mathematical Society
  \textbf{67} (1998), no.~223, 1225--1245.

\bibitem{SV10}
N.~P. Smart and F.~Vercauteren, \emph{Fully homomorphic encryption with
  relatively small key and ciphertext sizes}, Public Key Cryptography, 2010,
  pp.~420--443.

\bibitem{Smith09}
B.~Smith, \emph{{Isogenies and the Discrete Logarithm Problem in Jacobians of
  Genus 3 Hyperelliptic Curves}}, {Journal of Cryptology} \textbf{22} (2009),
  no.~4, 505--529.

\end{thebibliography}

\end{document}